\newtheorem{theorem}{Theorem}[section]
\newtheorem{lemma}[theorem]{Lemma}
\newtheorem{corollary}[theorem]{Corollary}
\newtheorem{result}[theorem]{Result}
\newtheorem{remarks}[theorem]{Remarks}
\newcommand{\erz}[1]{\langle #1\rangle}
\DeclareMathOperator{\tw}{tw}
\DeclareMathOperator{\PG}{PG}
\newcommand{\gauss}[2]{\ensuremath{\genfrac{[}{]}{0pt}{1}{#1}{#2}}}
\renewcommand{\gauss}[2]{{#1\brack #2}_q}
\date{}
\title{On the treewidth of generalized $q$-Kneser graphs}
\author{Klaus Metsch\thanks{Justus-Liebig-Universit\"{a}t, Mathematisches Institut,
Arndtstra{\ss}e 2, D-35392 Gie{\ss}en}}
\begin{document}
\maketitle

\begin{abstract}
The generalized $q$-Kneser graph $K_q(n,k,t)$ for integers $k>t>0$ and $n>2k-t$ is the graph whose vertices are the $k$-dimensional subspaces of an $n$-dimensional $F_q$-vectorspace with two vertices $U_1$ and $U_2$ adjacent if and only if $\dim(U_1\cap U_2)<t$. We determine the treewidth of the generalized $q$-Kneser graphs $K_q(n,k,t)$ when $t\ge 2$ and $n$ is sufficiently large compared to $k$. The imposed bound on $n$ is a significant improvement of the previously known bound. One consequence of our results is that the treewidth of each $q$-Kneser graph $K_q(n,k,t)$ with $k>t>0$ and $n\ge 3k-t+9$ is equal to $\gauss{n}{k}-\gauss{n-t}{k-t}-1$.
\end{abstract}

\textbf{Keywords:} generalized Kneser graph, treewidth, tree decomposition

\textbf{MSC (2020):} 51D05, 51E20

\section{Introduction}

In this paper we only consider simple \emph{graphs} $\Gamma$ without loops, that is  $\Gamma$ is a pair $(X,E)$ where $X$ is a non-empty set and $E$ is a set of subsets of cardinality two of $X$. By graph we always refer to a simple graph without loops. The elements of $X$ are called \emph{vertices} and the elements of $E$ are called \emph{edges} of $\Gamma$. We write $X=V(\Gamma)$. A graph is \emph{empty} if it has no edges, and \emph{finite} if it has only finitely many vertices.

A \emph{tree decomposition} of a graph $\Gamma$ is a pair $(T,B)$ where $T$ is a tree and $B=(B_t:t\in V(T))$ is a collection of subsets $B_t$ of $V(\Gamma)$, indexed by the vertices of $T$, such that
\begin{enumerate}[(TD1)]
\item every edge $\{u,v\}$ of $\Gamma$ is contained in $B_t$ for some $t\in V(T)$, and
\item for each $v\in V(\Gamma)$, the set $\{t\in V(T)\mid v\in B_t\}$ is not empty and the graph induced by $T$ on this set is connected.
\end{enumerate}
Each graph $\Gamma$ has the trivial tree decomposition where the tree has just one vertex $t$ and $B_t=V(\Gamma)$. The \emph{width} of a tree decomposition of a finite graph is the number $\max\{|B_t|-1\mid t\in V(T)\}$, and the \emph{treewidth} $\tw(\Gamma)$ of a finite graph $\Gamma$ is the smallest width of its tree decompositions. The treewidth of a graph measures how treelike a graph is. For example, the treewidth of a non-empty tree is one and the treewidth of a graph on $n$ vertices is at most $n-1$ with equality if and only if the graph is complete. There is a vast literature on the treewidth of graphs, see \cite{c3,c5,c6,c2,Liu&Cao&Lu,c4,c1} for some recent ones,  and there are applications. A famous one is by Robertson and Seymour \cite{Robertson&Seymour} when they proved that the treewidth of a minor of a graph cannot exceed the treewidth of the graph.

Let $q$ be a prime power and let $F_q$ be the finite field of order $q$. For integers $n,k,t$ with $k>t\ge 1$ and $n>2k-t$, the \emph{$q$-Kneser graph} $K_q(n,k,t)$ is the graph whose vertices are the subspaces of dimension $k$ of the vector space $F_q^n$ with two vertices $K$ and $K'$ adjacent if and only if $\dim(K\cap K')<t$. The condition $n>2k-t$ ensures that the graph is non-empty.
Considering the vertices of $K_q(n,k,t)$ as subspaces of the vector space dual to $F_q^n$, one shows that the graphs $K_q(n,k,t)$ and $K_q(n,n-k,n-2k+t)$ are isomorphic. It is therefore sufficient to consider Kneser graphs $K_q(n,k,t)$ with $n\ge 2k$. Notice that we do not allow $t=k$, which would yields only complete graphs.
In 2021 Cao, Liu, Lu and Lv proved the following theorem.

\begin{theorem}[\cite{CaoLiuLuLv}]
Let $q$ be the order of a finite field. For integers $n,k,t$ with $k>t\ge 1$ and
\begin{align}\label{Liubound}
n\ge 2t(k-t+1)+k+1
\end{align}
we have $\tw(K_q(n,k,t))=\gauss{n}{k}-\gauss{n-t}{k-t}-1$.
\end{theorem}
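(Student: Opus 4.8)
Write $\Gamma=K_q(n,k,t)$, $N=\gauss{n}{k}=|V(\Gamma)|$ and $\alpha=\gauss{n-t}{k-t}$. The plan is to prove the two inequalities $\tw(\Gamma)\le N-\alpha-1$ and $\tw(\Gamma)\ge N-\alpha-1$ separately. Fix a $t$-dimensional subspace $W_0$ of $F_q^n$ and set $S=\{K\in V(\Gamma):W_0\subseteq K\}$, so $|S|=\alpha$ and $S$ is an independent set of $\Gamma$, since any two members of $S$ meet in a subspace containing $W_0$. The key external inputs are the $q$-analogue of the Complete Intersection Theorem (Frankl--Wilson) and its cross-intersecting and Hilton--Milner-type stability versions: for $n$ in the stated range, $\alpha$ is exactly the independence number of $\Gamma$, a maximum independent set has (up to the obvious automorphisms) the form $S$, and two nonempty cross-$t$-intersecting families of $k$-subspaces cannot both have more than $\alpha$ members. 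It is precisely these extremal statements that force the hypothesis $n\ge 2t(k-t+1)+k+1$.

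\textbf{Upper bound.} I would exhibit a star-shaped tree decomposition. Let $T$ have a central node $c$ and one leaf $\ell_s$ for each $s\in S$, with $\ell_s$ adjacent to $c$; put $B_c=V(\Gamma)\setminus S$ and $B_{\ell_s}=\{s\}\cup\bigl(N_\Gamma(s)\setminus S\bigr)$. Conditions (TD1) and (TD2) are routine: an edge of $\Gamma$ with both ends outside $S$ lies in $B_c$, an edge $\{s,K\}$ with $s\in S$ lies in $B_{\ell_s}$, the nodes containing a fixed $K\notin S$ are $c$ together with the $\ell_s$ with $s\in N_\Gamma(K)\cap S$ (a connected substar), and each $s\in S$ lies only in $B_{\ell_s}$. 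The crucial point is that every $s\in S$ has a non-neighbour in $V(\Gamma)\setminus S$: since $k>t$ the space $s$ contains a $t$-subspace $W_1\ne W_0$, and since $n>2k-t$ there is a $k$-space $K$ with $K\cap s=W_1$; then $W_0\not\subseteq K$, so $K\notin S$, and $\dim(K\cap s)=t$, so $K$ is not adjacent to $s$. Hence $|B_{\ell_s}|\le|V(\Gamma)\setminus S|=N-\alpha=|B_c|$, and the width is at most $N-\alpha-1$.

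\textbf{Lower bound.} This is the substantial direction, and I would argue by contradiction: assume $\Gamma$ has a tree decomposition $(T,B)$ with $|B_t|\le N-\alpha-1$ for every node $t$; we may take $T$ finite and the decomposition reduced (no bag contained in a neighbouring bag). For a node $t$ and a neighbour $s$ let $V^{(t,s)}$ be the set of vertices $v$ of $\Gamma$ whose subtree $T_v$ lies in the component of $T-t$ containing $s$. Being reduced forces every $V^{(t,s)}$ to be nonempty; these sets partition $V(\Gamma)\setminus B_t$, there are no edges of $\Gamma$ between different branches, and hence the branches, viewed as families of $k$-subspaces, are pairwise cross-$t$-intersecting. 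Our assumption gives $\sum_s|V^{(t,s)}|=|V(\Gamma)\setminus B_t|\ge\alpha+1$ at every node. By the cross-$t$-intersecting bound, at each node at most one branch has size $\ge\alpha+1$. If some node has exactly one such "heavy" branch at every node, then sending a node to the neighbour towards which its heavy branch points defines a map $V(T)\to V(T)$ taking each node to a neighbour; on a finite tree such a map has a $2$-cycle, i.e.\ an edge $\{t,t'\}$ for which both $V^{(t,t')}$ and $V^{(t',t)}$ are heavy; but these two branches are disjoint and completely non-adjacent, contradicting the cross-$t$-intersecting bound. So we may assume some node $t^\ast$ has all of its branches of size $\le\alpha$.

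\textbf{Main obstacle.} Finishing from $t^\ast$ is where I expect the real work to lie: it is \emph{not} enough that the branches of $t^\ast$ are pairwise cross-$t$-intersecting and each of size $\le\alpha$, because such families can still have total size far larger than $\alpha$ (for instance a singleton $\{v\}$ together with $\alpha$ of the many $t$-intersectors of $v$). One must use the extra structure each branch inherits: the subtree hanging off $t^\ast$ in the direction of a branch is itself a tree decomposition, with all bags still of size $\le N-\alpha-1$, of the subgraph of $\Gamma$ induced on that branch together with the relevant part of $B_{t^\ast}$. Running this as an induction (on $N$, with the stability forms of the $t$-intersecting and cross-$t$-intersecting theorems to pin down the near-extremal configurations) should show that such a branch configuration is incompatible with the bag-size bound, yielding the contradiction and hence $\tw(\Gamma)\ge N-\alpha-1$. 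It is in this inductive control of branch sizes that the lower bound on $n$ is used, and I expect this step to require the most care; in particular one has to rule out the "obvious" small separators of $\Gamma$ (it does have separators of size well below $N-\alpha$) as ever producing a genuinely low-width decomposition. Combining the two bounds gives $\tw(K_q(n,k,t))=\gauss{n}{k}-\gauss{n-t}{k-t}-1$.
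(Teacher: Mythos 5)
Your upper bound is correct, and it is essentially the paper's route with the black box opened: your star-shaped decomposition (centre bag $V(\Gamma)\setminus S$, one leaf per vertex of the star $S$) is exactly the construction behind Result \ref{Harvey&Wood}, and the one fact you need — each $s\in S$ has a non-neighbour outside $S$, equivalently $\Delta(\Gamma)\le|V(\Gamma)|-\alpha(\Gamma)-1$ — is proved by the same second-$t$-subspace argument as in the lemma preceding Corollary \ref{upperbound}. So that half is fine.

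The lower bound, however, has a genuine gap, which you yourself flag: after reducing (via the heavy-branch argument) to a node $t^\ast$ all of whose branches have size at most $\alpha$, no contradiction is actually derived; the proposed ``induction on $N$ using stability forms of the intersecting theorems'' is a plan, not an argument, and it is precisely where all the difficulty sits, since (as you note) pairwise cross-$t$-intersecting branches of size $\le\alpha$ can have total size far exceeding $\alpha$. In addition, even your reduction leans on an unproved external input — that two nonempty cross-$t$-intersecting families of $k$-spaces cannot both exceed $\gauss{n-t}{k-t}$ — which is itself a nontrivial extremal theorem and would need a precise reference or proof valid in the range \eqref{Liubound}. This is not how the statement is actually established: the paper attributes it to \cite{CaoLiuLuLv}, and both that paper and the present one prove the lower bound through the Robertson--Seymour balanced-separator property (Lemma \ref{Robertson&Seymour}) rather than by analysing the whole tree. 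Concretely: if $\tw(\Gamma)\le\gauss{n}{k}-\alpha(\Gamma)-2$, there is a set $P$ inside a single bag with $|P|\le\tw(\Gamma)+1$ such that every component of $\Gamma\setminus P$ contains at most half of $Y:=V(\Gamma)\setminus P$; since $|Y|\ge\alpha(\Gamma)+1$ there is an edge $\{S_1,S_2\}$ in $Y$, more than $\tfrac12\alpha(\Gamma)$ vertices of $Y$ lie outside its component, and each of these meets both $S_1$ and $S_2$ in dimension at least $t$. Counting the $k$-spaces through pairs of $t$-subspaces $T_1\subseteq S_1$, $T_2\subseteq S_2$ (Lemma \ref{numberofpairs} together with the Gaussian-coefficient estimates of Lemma \ref{guassestimate}) bounds this number below $\tfrac12\alpha(\Gamma)$ once $n$ satisfies the hypothesis — a purely quantitative finish that needs neither cross-intersecting extremal theorems nor induction over the tree. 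Your framework would still have to supply an analogue of this quantitative step at $t^\ast$, and with strictly weaker structural information than the balanced separator provides; as it stands, the lower bound is not proved.
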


In this result the Gaussian coefficient $\gauss{n}{k}$ is defined for integers $n\ge k\ge 0$ and $q\ge 2$ as follows.
\begin{align}
\gauss{n}{k}:=\prod_{i=0}^{k-1}\frac{q^{n-i}-1}{q^i-1}.
\end{align}

The hard part of the theorem is to prove the lower bound for the treewidth. For the upper bound, the authors of \cite{CaoLiuLuLv} proved in fact that $\tw(K_q(n,k,t))\le \gauss{n}{k}-\gauss{n-t}{k-t}-1$ holds whenever $k>t\ge 1$ and $n\ge 2k$. In this paper we give a shorter proof for this upper bound and we will weaken the required condition \eqref{Liubound} for the lower bound substantially. Our main result is the following. To formulate it, we define the map $\epsilon$ from the set of all primepowers to the set $\{0,1,2,3,4\}$ by $\epsilon(2)=9$, $\epsilon(3)=3$, $\epsilon(4)=2$, $\epsilon(q)=1$ for $5\le q\le 8$, and $\epsilon(q)=0$ for $q\ge 9$.

\begin{theorem}\label{main1}
Consider a $q$-Kneser graph $K_q(n,k,t)$ with $k>t\ge 1$ and $n\ge 2k$. Suppose that one of the following condition is satisfied.
\begin{enumerate}[(1)]
\item $t\le \epsilon(q)$ and $n>3k-2t+\epsilon(q)$.
\item $t>\epsilon(q)$ and $n>3k-t+1-2\sqrt{t-\epsilon(q)}$.
\end{enumerate}
Then
$\tw(K_q(n,k,t))=\gauss{n}{k}-\gauss{n-t}{k-t}-1$.
\end{theorem}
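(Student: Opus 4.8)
\emph{The upper bound.} I would first establish $\tw(K_q(n,k,t))\le\gauss{n}{k}-\gauss{n-t}{k-t}-1$, which in fact holds for every $n>2k-t$; this reproves, by a short argument, the bound from \cite{CaoLiuLuLv}. Write $\Gamma=K_q(n,k,t)$ and $N=\gauss{n}{k}$. Fix a $t$-dimensional subspace $S_0$ and let $I_0$ be the set of $k$-subspaces containing $S_0$; any two members of $I_0$ meet in dimension at least $t$, so $I_0$ is an independent set of $\Gamma$ with $|I_0|=\gauss{n-t}{k-t}$. Now take the tree decomposition $(T,B)$ whose tree is a star with centre $c$ and one leaf $\ell_U$ for each $U\in I_0$, and whose bags are $B_c:=V(\Gamma)\setminus I_0$ and $B_{\ell_U}:=\{U\}\cup N_\Gamma(U)$. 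Since $I_0$ is independent, $N_\Gamma(U)\subseteq V(\Gamma)\setminus I_0$ for every $U\in I_0$, and (TD1), (TD2) are then routine: an edge with an endpoint $U\in I_0$ lies in $B_{\ell_U}$ while every other edge lies in $B_c$; each $U\in I_0$ occurs only in $B_{\ell_U}$; and each vertex outside $I_0$ occurs in $B_c$ and in the bags $B_{\ell_U}$ of its neighbours $U\in I_0$, whose index set contains $c$ and is hence connected in the star. The width equals $\max\{\,N-\gauss{n-t}{k-t}-1,\ \max_{U\in I_0}\deg_\Gamma(U)\,\}$, so it remains to show $\deg_\Gamma(U)\le N-\gauss{n-t}{k-t}-1$ for every $k$-subspace $U$, i.e.\ that $U$ meets at least $\gauss{n-t}{k-t}+1$ of the $k$-subspaces ($U$ included) in dimension $\ge t$. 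Because $\gauss{k}{t}\ge\gauss{t+1}{1}=q+1\ge2$, $U$ contains two distinct $t$-subspaces $S_1\ne S_2$; the $k$-subspaces through $S_1$ together with those through $S_2$ all have the stated property, and there are $2\gauss{n-t}{k-t}-\gauss{n-\dim(S_1+S_2)}{k-\dim(S_1+S_2)}\ge 2\gauss{n-t}{k-t}-\gauss{n-t-1}{k-t-1}=\gauss{n-t}{k-t}+q^{k-t}\gauss{n-t-1}{k-t}>\gauss{n-t}{k-t}$ of them (using $\dim(S_1+S_2)\ge t+1$ and the $q$-Pascal identity), as required.

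\emph{The lower bound.} The substantial part is to show that every tree decomposition of $\Gamma=K_q(n,k,t)$ has a bag of size at least $M:=\gauss{n}{k}-\gauss{n-t}{k-t}$. Two structural facts about $k$-subspaces are used throughout. (i) The $q$-analogue of the Erd\H{o}s--Ko--Rado theorem (Frankl--Wilson): $\alpha(\Gamma)=\gauss{n-t}{k-t}$, the maximum independent sets being exactly the $t$-stars $\{U:S\subseteq U\}$ over $t$-subspaces $S$. (ii) A quantitative stability strengthening of (i): if a set $A$ of $k$-subspaces has a small closed neighbourhood, $|N_\Gamma[A]|<M$, then $A$ is ``clustered'', i.e.\ close to being contained in a single $t$-star (equivalently, $\Gamma$ is so highly connected that sets with small external boundary are either small and clustered or have small complement). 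The conditions (1)--(2) on $n$ are precisely what is needed to make (ii) quantitatively strong enough; the values $\epsilon(q)$ and the term $2\sqrt{t-\epsilon(q)}$ come from optimising the counting estimates that underlie (ii).

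Here is the scheme. Given a tree decomposition $(T,B)$, I would first reduce it by repeatedly deleting any leaf whose bag is contained in the bag of its neighbour; this alters neither $\Gamma$ nor the maximum bag size. If $T$ collapses to a single node, its bag is $V(\Gamma)$, of size $\gauss{n}{k}\ge M$, and we are done; otherwise we obtain a reduced decomposition with at least two nodes in which no leaf bag is contained in the bag of its neighbour. Fix a node $t$; deleting $t$ leaves branches $C_1,\dots,C_d$ and partitions $V(\Gamma)=B_t\cup W_1\cup\dots\cup W_d$ with $W_i=\{v:T_v\subseteq C_i\}$. By the standard properties of tree decompositions each $W_i$ is non-empty and no edge of $\Gamma$ joins distinct $W_i$, so $\alpha\bigl(\Gamma[W_1\cup\dots\cup W_d]\bigr)=\sum_i\alpha(\Gamma[W_i])\le\gauss{n-t}{k-t}$. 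If some node $r$ has $\sum_i|W_i|\le\gauss{n-t}{k-t}$, then $|B_r|\ge N-\gauss{n-t}{k-t}=M$ and we are done; so suppose $\sum_i|W_i|\ge\gauss{n-t}{k-t}+1$, equivalently $|B_t|<M$, at every node $t$, and aim for a contradiction. The branch sets carry an extra restriction from the leaves: if $\ell$ is a leaf with neighbour $m$, then every $v\in B_\ell\setminus B_m$ has $T_v=\{\ell\}$, so $N_\Gamma[v]\subseteq B_\ell$, whence $N_\Gamma[B_\ell\setminus B_m]\subseteq B_\ell$ and thus $|N_\Gamma[B_\ell\setminus B_m]|<M$; by (ii), $B_\ell\setminus B_m$ is clustered near some $t$-subspace. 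More generally, rooting $T$ at a leaf, the set ``owned'' by a subtree has its external neighbourhood contained in the relevant adhesion set, hence of size $<M$, so by (ii) (and the high connectivity of $\Gamma$) each owned set is either small and clustered or has small complement. Tracking these nested owned sets up the tree, and using that every attaching bag has size $<M$, one is forced to conclude that the decomposition resembles the star decomposition of the upper bound so closely that one of its bags has size $\ge M$ — the desired contradiction. Making ``clustered'' precise enough for this bookkeeping to close is the crux of the matter, and it is exactly here that the hypotheses on $n$ enter.
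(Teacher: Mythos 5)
Your upper-bound argument is correct: the star decomposition over a maximum independent set $I_0$ (a $t$-star), together with the degree estimate obtained from two distinct $t$-subspaces of $U$ and the $q$-Pascal identity, is sound and is essentially the content of the Harvey--Wood bound $\tw(\Gamma)\le\max\{\Delta(\Gamma),|V(\Gamma)|-\alpha(\Gamma)-1\}$ combined with the degree lemma that the paper also proves.

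The lower bound, however, has a genuine gap, and it sits exactly where the theorem's hypotheses live. Your plan rests on an unstated and unproven ingredient (ii): a quantitative stability version of the Frankl--Wilson theorem asserting that any set $A$ of $k$-spaces with $|N_\Gamma[A]|<\gauss{n}{k}-\gauss{n-t}{k-t}$ is ``clustered'' near a $t$-star, strong enough that the conditions $n>3k-2t+\epsilon(q)$ resp.\ $n>3k-t+1-2\sqrt{t-\epsilon(q)}$ suffice. You never formulate this lemma precisely, never prove it, and never show that these specific bounds on $n$ (including the values of $\epsilon(q)$) would emerge from it; you simply assert that they are ``precisely what is needed.'' Likewise the final step -- ``tracking these nested owned sets up the tree \dots one is forced to conclude'' that some bag has size $\ge M$ -- is asserted, not argued; you yourself flag it as ``the crux of the matter.'' So the entire quantitative core of the theorem is missing. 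For comparison, the paper needs no stability result at all: it takes from Robertson--Seymour a bag containing a set $P$ with $|P|\le\tw(\Gamma)+1$ such that every component of $\Gamma\setminus P$ has at most half of $|V(\Gamma)\setminus P|$ vertices; assuming $\tw(\Gamma)\le\gauss{n}{k}-\alpha(\Gamma)-2$, the set $Y=V(\Gamma)\setminus P$ contains an edge $S_1S_2$, and more than $\frac12\alpha(\Gamma)$ vertices of $Y$ are non-neighbours of both $S_1$ and $S_2$, i.e.\ $k$-spaces meeting both $S_1$ and $S_2$ in dimension $\ge t$. Counting such $k$-spaces via pairs of $t$-subspaces $(T_1,T_2)$ with $T_i\subseteq S_i$, estimating Gaussian coefficients by powers of $q$, and bounding the resulting sum $\sum_i q^{f(i)}$ for a concave quadratic $f$ yields a single explicit inequality from which the bounds on $n$, and the function $\epsilon(q)$, drop out. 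If you want to complete your route you would have to prove an EKR-stability theorem for vector spaces with explicit constants sharp enough to recover the $2\sqrt{t-\epsilon(q)}$ term, plus the tree-bookkeeping argument; as written, the proposal does not prove the theorem.
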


The strongest result is obtained for $q\ge 9$, where the required bound is only $n>3k-t+1-2\sqrt{t}$. As a simpler formulation not involving $\epsilon(q)$ we can state the following.

\begin{corollary}\label{maincor1}
For all integers $k>t>0$ and $n\ge 3k-t+9$ and any prime power $q$, the  $q$-Kneser graph $K_q(n,k,t)$ with has treewidth $\gauss{n}{k}-\gauss{n-t}{k-t}-1$.
\end{corollary}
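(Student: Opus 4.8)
The plan is to obtain Corollary~\ref{maincor1} directly from Theorem~\ref{main1}: nothing genuinely new is needed, and the only task is to check that the uniform hypothesis $n\ge 3k-t+9$ is strong enough to force one of the two alternatives of that theorem for every prime power $q$. As a preliminary step I would observe that the standing assumption $n\ge 2k$ of Theorem~\ref{main1} is automatic here, because $k>t$ gives $3k-t+9=2k+(k-t)+9\ge 2k+10$, so $n\ge 3k-t+9$ already implies $n>2k$ and no passage to the dual graph $K_q(n,n-k,n-2k+t)$ is needed.

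The heart of the proof is then a short case distinction turning only on the bound $\epsilon(q)\le 9$, which holds for every prime power $q$ (with equality exactly for $q=2$). If $t\le\epsilon(q)$, I would verify condition (1): using $t\ge 1$ and $\epsilon(q)\le 9$,
\[
3k-2t+\epsilon(q)=(3k-t+9)+\bigl(\epsilon(q)-t-9\bigr)\le (3k-t+9)-1<n ,
\]
so $n>3k-2t+\epsilon(q)$ as required. If instead $t>\epsilon(q)$, then $t-\epsilon(q)\ge 1$, hence $2\sqrt{t-\epsilon(q)}>0$ and
\[
3k-t+1-2\sqrt{t-\epsilon(q)}<3k-t+1<3k-t+9\le n ,
\]
so condition (2) holds. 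In both cases Theorem~\ref{main1} applies and yields $\tw(K_q(n,k,t))=\gauss{n}{k}-\gauss{n-t}{k-t}-1$, which is exactly the asserted value.

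I do not expect any real obstacle: every difficulty — the short argument for the upper bound $\tw(K_q(n,k,t))\le\gauss{n}{k}-\gauss{n-t}{k-t}-1$, valid for all $n\ge 2k$, and the considerably harder matching lower bound — is already dealt with in the proof of Theorem~\ref{main1}. The only point that deserves a second look is why the constant is $9$ and not something smaller: the binding case is $q=2$, where $\epsilon(2)=9$ and the slack $t+9-\epsilon(q)$ in condition (1) is smallest, dropping to $1$ when $t=1$; there condition (1) reads $n>3k+7$, still implied by $n\ge 3k-t+9=3k+8$.
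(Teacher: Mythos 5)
Your proposal is correct and follows exactly the paper's route: the paper simply states that Corollary~\ref{maincor1} follows from Theorem~\ref{main1}, and your case split on $t\le\epsilon(q)$ versus $t>\epsilon(q)$, using $\epsilon(q)\le 9$, supplies precisely the routine verification that $n\ge 3k-t+9$ forces condition (1) or (2). Your closing remark correctly identifies $q=2$, $t=1$ as the case that makes the constant $9$ tight for this deduction.
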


For some pairs $(k,t)$ and fields $F_q$ our result determines the treewidth for all $n\ge 2k$. We formulate this in the following corollary. For simplicity we only consider the case when $q$ is at least nine.

\begin{corollary}\label{maincor2}
Let $q\ge 9$ be the order of a finite field and let $k$ and $t$ be integers with $k>t\ge 1$, and $t>k+3-2\sqrt{k+2}$. Then
$$\tw(K_q(n,k,t))=\gauss{n}{k}-\gauss{n-t}{k-t}-1$$ for all $n\ge 2k$.
\end{corollary}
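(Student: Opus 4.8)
The plan is to deduce the corollary directly from Theorem~\ref{main1}, case~(2), by showing that the hypothesis $t>k+3-2\sqrt{k+2}$ is exactly the condition which makes the bound in part~(2) of that theorem vacuous for every $n\ge 2k$. First I would record that $q\ge 9$ forces $\epsilon(q)=0$ by the definition of $\epsilon$, and that $t\ge 1>0=\epsilon(q)$; hence among the two alternatives of Theorem~\ref{main1} only case~(2) can apply, and its requirement reads $n>3k-t+1-2\sqrt{t-\epsilon(q)}=3k-t+1-2\sqrt{t}$.

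The next step is the elementary equivalence
\[
2k>3k-t+1-2\sqrt{t}\quad\Longleftrightarrow\quad t>k+3-2\sqrt{k+2}.
\]
Starting from the left-hand inequality, rearrange it to $t+2\sqrt{t}>k+1$, add $1$ to both sides to complete a square on the left, obtaining $(\sqrt{t}+1)^2>k+2$. Since $k>t\ge 1$ gives $k\ge 2$, both $\sqrt{t}+1$ and $\sqrt{k+2}$ are positive, so this is equivalent to $\sqrt{t}+1>\sqrt{k+2}$, that is $\sqrt{t}>\sqrt{k+2}-1>0$, and squaring once more yields $t>(\sqrt{k+2}-1)^2=k+3-2\sqrt{k+2}$; every step here is reversible.

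Finally, under the hypothesis $t>k+3-2\sqrt{k+2}$ the displayed equivalence gives $2k>3k-t+1-2\sqrt{t}$, so for each $n\ge 2k$ we have $n\ge 2k>3k-t+1-2\sqrt{t}$. Thus condition~(2) of Theorem~\ref{main1} holds (together with $t>\epsilon(q)$ and the standing assumption $n\ge 2k$), and the theorem gives $\tw(K_q(n,k,t))=\gauss{n}{k}-\gauss{n-t}{k-t}-1$, as claimed.

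There is no substantial obstacle: the whole content is the reduction to Theorem~\ref{main1} and the chain of equivalences above. The only point requiring a little care is the legitimacy of squaring, i.e.\ that $\sqrt{k+2}-1$ and the other quantities involved are positive, which is automatic from $k\ge 2$; one should also note that the case distinction in Theorem~\ref{main1} is resolved unconditionally here because $\epsilon(q)=0<t$.
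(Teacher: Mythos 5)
Your proposal is correct and follows essentially the same route as the paper: since $\epsilon(q)=0$ for $q\ge 9$, the hypothesis $t>k+3-2\sqrt{k+2}$ is shown (by the same completing-the-square manipulation, which the paper states more tersely as $k-t+1<2\sqrt{t}$) to force $3k-t+1-2\sqrt{t}<2k\le n$, so case (2) of Theorem \ref{main1} applies for every $n\ge 2k$. Your version is slightly more careful about strictness and the positivity needed for squaring, but it is the same argument.
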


\begin{remarks}
\begin{enumerate}
\item For the non-modular Kneser graphs $K(n,k,t)$ similar results have been proved in \cite{Harvey&Wood}, \cite{Liu&Cao&Lu} and     \cite{Metsch}.
\item As was shown in \cite{CaoLiuLuLv}, see also Corollary \ref{upperbound}, we have $\tw(\Gamma)\le |V(\Gamma)|-\alpha(\Gamma)-1$ for every $q$-Kneser graph $\Gamma$. Notice however that the independence number of $K_q(n,k,t)$ is $\gauss{n-t}{k-t}$ when $n\ge 2k$ and $\gauss{2k-t}{k-t}$ otherwise.
\end{enumerate}
\end{remarks}

The Kneser graph $K_q(n,k,k-1)$ is the dual of the Grassmann graph. Part (2) of Theorem \ref{main1} determines its treewidth for all $k\ge 3+\epsilon(q)$ and $n\ge 2k$. This was proved in \cite{CaoLiuLuLv} more generally for all $k\ge 3$ and also in the case $k=2$ and $n\ge 5$. The case $K_q(4,2,1)$ remained open in \cite{CaoLiuLuLv}. Our last results settles this remaining open case. This requires a more detailed look at tree decompositions.

\begin{theorem}
The treewidth of $K_q(4,2,1)$ is $|V(\Gamma)|-(q^2+q+2)$ for all prime powers $q\ge 2$.
\end{theorem}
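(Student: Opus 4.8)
Since $n=2k$ for $K_q(4,2,1)$, a point-pencil (the $q^2+q+1$ lines of $\PG(3,q)$ through a fixed point) is a maximum independent set of $\Gamma:=K_q(4,2,1)$, so $\alpha(\Gamma)=\gauss{3}{1}=q^2+q+1$, as already recorded in the Remarks. Hence Corollary~\ref{upperbound} gives
\[\tw(\Gamma)\ \le\ |V(\Gamma)|-\alpha(\Gamma)-1\ =\ |V(\Gamma)|-(q^2+q+2)\ =\ \gauss{4}{2}-\gauss{3}{1}-1 ,\]
and all that remains is the reverse inequality: I must show that every tree decomposition $(T,B)$ of $\Gamma$ has a bag of size at least $|V(\Gamma)|-(q^2+q+1)$; equivalently that $(T,B)$ cannot have all bags of size at most $|V(\Gamma)|-(q^2+q+2)$.

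\textbf{Setup.} The plan is to view $\Gamma$ as the graph whose vertices are the lines of $\PG(3,q)$, adjacency being ``skew''. Because $K_q(4,2,1)$ is self-dual, its maximum independent sets are precisely the point-pencils and the plane-pencils (the $q^2+q+1$ lines in a fixed plane); equivalently, via the Klein correspondence $\Gamma$ is the non-collinearity graph of the hyperbolic quadric $Q^+(5,q)$, an independent set corresponding to the points of a totally singular subspace, with the two types of maximum independent set corresponding to the two systems of generator planes. The key elementary observation is that if $\Gamma-S$ is disconnected then any two of its components are \emph{cross-intersecting} line families (every line of the one meets every line of the other). The structural core of the argument is then a dichotomy for a pair $\mathcal F_1,\mathcal F_2$ of disjoint, nonempty, cross-intersecting families of lines: either $\langle\mathcal F_1\rangle$ or $\langle\mathcal F_2\rangle$ is a generator of the Klein quadric, in which case $\mathcal F_1\cup\mathcal F_2$ lies inside a single point-pencil or plane-pencil and hence $|\mathcal F_1|+|\mathcal F_2|\le q^2+q+1$; or one of the families, say $\mathcal F_1$, is \emph{degenerate} --- $\langle\mathcal F_1\rangle$ has small dimension, $|\mathcal F_1|$ is correspondingly small, and $\mathcal F_2$ is confined to the explicit set of lines carried by the section $\langle\mathcal F_1\rangle^{\perp}\cap Q^+$. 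I would prove this by a short case analysis on $\dim\langle\mathcal F_1\rangle$ and on the projective type of that section. This is exactly where $n=2k=4$ is harder than the range covered by Theorem~\ref{main1}: both generator systems are present, so the first alternative has two faces, and the list of degenerate configurations in the second is longer and must be treated separately for ``point-type'' and ``plane-type'' degeneracies.

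\textbf{Lower bound, granting the dichotomy.} Suppose, for a contradiction, that $(T,B)$ is a tree decomposition of $\Gamma$ with $|V(\Gamma)\setminus B_t|\ge q^2+q+2$ for every node $t$. At any node $t$ the components of $\Gamma-B_t$ partition $V(\Gamma)\setminus B_t$ into pairwise cross-intersecting classes, each contained in a single branch of $(T,B)$ at $t$. If $\Gamma-B_t$ is disconnected, the dichotomy applied to (one component) versus (the union of the others) forces --- since $q^2+q+2>q^2+q+1$ --- the degenerate alternative: $\Gamma-B_t$ has a small, explicitly described component $C$, and $N_\Gamma(C)\subseteq B_t$ is a correspondingly small explicit line set. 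One then exploits the tree: $C$ lies in a single branch of $t$, and $\Gamma-N_\Gamma(C)$ is again a Kneser-type configuration of lines with unchanged independence number $q^2+q+1$, so $C$ can be ``peeled off'' to yield a tree decomposition of a smaller graph that still violates the bound relative to its own independence number. Bounding how often such a peeling is possible before the complement of a bag drops below the threshold $q^2+q+2$, and using self-duality to forbid compensating a point-type peeling by a plane-type one past that threshold, produces the contradiction; and if instead $\Gamma-B_t$ is connected for the relevant (centroid) node, a balancing argument with a weighting supported on a fixed maximum independent set forces $B_t$ itself to contain all but at most $q^2+q+1$ vertices.

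\textbf{The main obstacle.} The whole difficulty sits in the degenerate case. A single line of $\PG(3,q)$ is skew to $q^4$ lines but non-adjacent to only $\Theta(q^3)$ lines, so one component can be cut off from the rest by a bag of size only about $q^4$; no single separation of $\Gamma$ is to blame, and the crude argument reproves merely a lower bound of order $q^4$ rather than the exact value $|V(\Gamma)|-(q^2+q+2)$ with its correct lower-order term. Converting the degenerate configurations into a genuine contradiction will be the hard part: it requires marrying the dichotomy to the global structure of the tree decomposition and to the self-duality of $K_q(4,2,1)$ --- precisely the ``more detailed look at tree decompositions'' that the last case of the paper calls for.
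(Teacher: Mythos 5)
Your upper bound and the geometric setup agree with the paper (Corollary~\ref{upperbound}, the Klein correspondence, and the observation that distinct components of $\Gamma$ minus a bag are cross-intersecting line families, which is also the engine of the paper's Case~1/Case~2 split). But the lower bound is not actually proved: the two steps on which your contradiction rests are gaps, not arguments. First, the ``peeling'' step is unjustified. Removing $N_\Gamma(C)$ from $\Gamma$ does not produce a $q$-Kneser graph or any graph for which you have a treewidth-versus-independence statement, no induction hypothesis is formulated, and the claim that the residual graph ``still violates the bound relative to its own independence number'' is exactly what would need proof; likewise the ``balancing argument with a weighting supported on a fixed maximum independent set'' for the connected case is only named, not given. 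You even concede this yourself in the final paragraph (``Converting the degenerate configurations into a genuine contradiction will be the hard part''), which is precisely the part of the theorem that carries all the difficulty.

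Second, you are missing the quantitative leverage that makes the degenerate case tractable. The paper does not work with an arbitrary node of an arbitrary decomposition; it invokes the Robertson--Seymour refinement (Lemma~\ref{Robertson&Seymour}) to get a single bag $B_{t_0}$ containing a set $P$ such that every component of $\Gamma\setminus P$ has at most half of $Y=V(\Gamma)\setminus P$, whence $|Y\setminus X|\ge\tfrac12|Y|>q+1$. That inequality is what kills the ``three pairwise skew lines'' case outright (a conic plane has only $q+1$ quadric points) and forces the degenerate configurations into the very narrow window $q=2$, $|Y|\in\{8,9\}$, with $Y$ essentially two $4$-cycles plus the point $z$. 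Even then the contradiction is not automatic: the paper needs the fine tree-decomposition properties of Lemma~\ref{treeproperties} together with the non-nestedness of adjacent bags (property (T2) from Lemma~\ref{Robertson&Seymour}(a)) to rule out Cases 2.2.1 and 2.2.2. Your outline has no substitute for either ingredient, so as it stands it establishes only the upper bound and a program for the lower bound, not the theorem.
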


\section{An upper bound for the treewidth}\label{sectionupperbound}

For every finite graph $\Gamma$ its maximum vertex degree is denoted by $\Delta(\Gamma)$ and is called its \emph{maximum degree}. The cardinality of a largest independent set of $\Gamma$ is denoted by $\alpha(\Gamma)$ and is called the \emph{independence number} of the graph. The following connection between treewidth, maximum degree and independence number is known.

\begin{result}[\cite{Harvey&Wood}]\label{Harvey&Wood}
For every finite graph $\Gamma$ we have
\begin{align}\label{twboundwithDelta}
\tw(\Gamma)\le\max\{\Delta(\Gamma),|V(\Gamma)|-\alpha(\Gamma)-1\}.
\end{align}
\end{result}

For generalized $q$-Kneser graphs we have $\Delta(\Gamma)\le |V(\Gamma)|-\alpha(\Gamma)-1$ as was shown for $n\ge 2k$ in \cite{CaoLiuLuLv}. We give a simpler proof for this inequality. In order to do so, we need the following result of Wilson on the independence number of generalized Kneser graphs.

\begin{result}[\cite{Frankl&Wilson}]\label{Frankl&Wilson}
For integers $k>t\ge 1$ and $n>2k-t$ we have
\begin{align*}
\alpha(K_q(n,k,t))=max\{\gauss{n-t}{k-t},\gauss{2k-t}{k-t}\}.
\end{align*}
\end{result}

\begin{lemma}
For Kneser graphs $K_q(n,k,t)$ with $n>2k-t$ and $k>t\ge 1$ we have
\begin{align*}
\Delta(K_q(n,k,t))\le\gauss{n}{k}-\alpha(K_q(n,k,t))-1.
\end{align*}
\end{lemma}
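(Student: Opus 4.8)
The plan is to bound the degree of an arbitrary vertex $K$ of $\Gamma:=K_q(n,k,t)$ directly. Since a $k$-subspace $K'$ is adjacent to $K$ precisely when $\dim(K\cap K')<t$, the non-neighbours of $K$, together with $K$ itself, are exactly the $k$-subspaces $K'$ with $\dim(K\cap K')\ge t$; let $M$ be their number. Then $\deg(K)=\gauss{n}{k}-M$, so it suffices to show $M\ge\alpha(\Gamma)+1$; this then holds for every vertex and hence bounds $\Delta(\Gamma)$.

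To lower-bound $M$ I would exhibit a single large family $\mathcal F$ of $k$-subspaces each meeting $K$ in dimension at least $t$. Fix a $t$-dimensional subspace $S$ of $K$ and, using $n>2k-t$, a $(2k-t)$-dimensional subspace $T$ with $K\subseteq T$; let $\mathcal F$ be the set of all $k$-subspaces containing $S$ together with all $k$-subspaces contained in $T$. If $S\subseteq K'$ then $S\subseteq K\cap K'$; if $K'\subseteq T$ then $\dim(K\cap K')=\dim K+\dim K'-\dim(K+K')\ge 2k-(2k-t)=t$. Hence $M\ge|\mathcal F|$.

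Next I compute $|\mathcal F|$ by inclusion--exclusion. The $k$-subspaces through $S$ number $\gauss{n-t}{k-t}$; the $k$-subspaces of $T$ number $\gauss{2k-t}{k}=\gauss{2k-t}{k-t}$; and the $k$-subspaces $K'$ with $S\subseteq K'\subseteq T$ are in bijection with the $(k-t)$-subspaces of $T/S$ and so number $\gauss{2k-2t}{k-t}$. Thus
\[
|\mathcal F|=\gauss{n-t}{k-t}+\gauss{2k-t}{k-t}-\gauss{2k-2t}{k-t}.
\]
Because $k>t$, the bottom index $k-t$ is at least $1$, so $\gauss{a}{k-t}$ is strictly increasing in $a$; since $n-t>2k-2t$ and $2k-t>2k-2t$ we get $\gauss{n-t}{k-t}\ge\gauss{2k-2t}{k-t}+1$ and $\gauss{2k-t}{k-t}\ge\gauss{2k-2t}{k-t}+1$. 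Feeding either inequality into the displayed identity yields $|\mathcal F|\ge\max\{\gauss{n-t}{k-t},\gauss{2k-t}{k-t}\}+1$, which equals $\alpha(\Gamma)+1$ by Result~\ref{Frankl&Wilson}. Hence $M\ge\alpha(\Gamma)+1$ and $\deg(K)\le\gauss{n}{k}-\alpha(\Gamma)-1$ for every vertex $K$, proving the lemma.

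I expect no real obstacle; the lemma is elementary. The only points requiring care are obtaining the strict inequality (the crucial ``$+1$''): one must check that the overlap term $\gauss{2k-2t}{k-t}$ is strictly smaller than each of the two main terms, which is exactly where the hypotheses $k>t$ (so the bottom index is nonzero, making the monotonicity strict) and $n>2k-t$ (so $T$ exists and $n-t>2k-2t$) enter, and one must recall the symmetry $\gauss{2k-t}{k}=\gauss{2k-t}{k-t}$ so that the ``contained in $T$'' family is seen to have the size appearing in Wilson's bound for $\alpha$.
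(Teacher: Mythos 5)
Your proof is correct, and it takes a mildly but genuinely different route from the paper. The paper first reduces to the case $n\ge 2k$ via the isomorphism $K_q(n,k,t)\cong K_q(n,n-k,n-2k+t)$, so that $\alpha(\Gamma)=\gauss{n-t}{k-t}$, and then obtains the crucial strict inequality by noting that besides the $\gauss{n-t}{k-t}$ $k$-subspaces through one fixed $t$-subspace $T_0$ of the vertex there is at least one further non-neighbour, namely a $k$-subspace meeting the vertex exactly in a second $t$-subspace. You avoid the duality reduction (and any appeal to regularity) and treat the whole range $n>2k-t$ at once: by combining the family of $k$-spaces through a fixed $t$-subspace $S\subseteq K$ with the family of $k$-spaces inside a fixed $(2k-t)$-space $T\supseteq K$ and applying inclusion--exclusion, your count $\gauss{n-t}{k-t}+\gauss{2k-t}{k-t}-\gauss{2k-2t}{k-t}$ dominates both terms of Wilson's maximum simultaneously, which is exactly what is needed since $\alpha(\Gamma)=\max\{\gauss{n-t}{k-t},\gauss{2k-t}{k-t}\}$. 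The points needing care are all handled: $T$ exists because $n\ge 2k-t$, the overlap is counted via $T/S$, the symmetry $\gauss{2k-t}{k}=\gauss{2k-t}{k-t}$ is used correctly, and the strict monotonicity of $\gauss{a}{k-t}$ in $a$ (valid since $k-t\ge 1$, with $n-t>2k-2t$ and $2k-t>2k-2t$) yields the ``$+1$''. The paper's argument is a bit shorter; yours is more self-contained in that it proves the stated inequality for all $n>2k-t$ without passing to the dual graph.
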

\begin{proof}
Since $K_q(n,k,t)$ and $K_q(n,n-k,n-2k+t)$ are isomorphic graphs, we may assume that $n\ge 2k$. The graph $\Gamma:=K_q(n,k,t)$ has $\gauss{n}{k}$ vertices and is regular. Given a vertex $A$, then $r:=\gauss{n}{k}-\Delta(\Gamma)$ is the number of vertices of the graph that are not adjacent to it. Considering $A$ as a $k$-subspace of $F_q^n$, then $r$ is the number of $k$-subspaces meeting $A$ in a subspace of dimension at least $t$. We have to show that this number is strictly larger than $\alpha(\Gamma)$.

From \ref{Frankl&Wilson} we have $\alpha(\Gamma)=\gauss{n-t}{k-t}$. Let $T_0$ be a $t$-subspace of $A$. Then $T$ is contained in $\gauss{n-t}{k-t}$$k$-subspaces. Let $T$ be a second $t$-subspace of $A$. Notice that $T$ exists as $t<k$. Then there exist $k$-subspaces meeting $A$ exactly in $T$. This shows that $r>\gauss{n-t}{k-t}=\alpha(\Gamma)$ as desired.
\end{proof}

\begin{corollary} \label{upperbound}
For Kneser graphs $K_q(n,k,t)$ with $n>2k-t$ and $1\le t<k$ we have
\begin{align*}
\tw(K_q(n,k,t))\le
\gauss{n}{k}-\alpha(K_q(n,k,t))-1.
\end{align*}
\end{corollary}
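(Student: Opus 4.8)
The plan is simply to combine the two facts assembled earlier in this section. Result~\ref{Harvey&Wood} of Harvey and Wood gives, for any finite graph $\Gamma$, the bound $\tw(\Gamma)\le\max\{\Delta(\Gamma),|V(\Gamma)|-\alpha(\Gamma)-1\}$. I would apply this with $\Gamma:=K_q(n,k,t)$, first recording the trivial vertex count $|V(\Gamma)|=\gauss{n}{k}$, which holds because the vertices are by definition the $k$-dimensional subspaces of $F_q^n$ and there are exactly $\gauss{n}{k}$ of these.

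It then remains to check that the maximum in \eqref{twboundwithDelta} is attained by its second argument, i.e. that $\Delta(\Gamma)\le|V(\Gamma)|-\alpha(\Gamma)-1$. But this is precisely the statement of the preceding Lemma, which asserts $\Delta(K_q(n,k,t))\le\gauss{n}{k}-\alpha(K_q(n,k,t))-1$. Substituting $|V(\Gamma)|=\gauss{n}{k}$ together with this degree bound into Result~\ref{Harvey&Wood} immediately gives $\tw(\Gamma)\le\gauss{n}{k}-\alpha(\Gamma)-1$, which is the claim; note that the hypotheses $n>2k-t$ and $1\le t<k$ are exactly those under which the Lemma was proved, so no extra case analysis is needed.

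I do not expect any real obstacle here: the corollary is a formal consequence of Result~\ref{Harvey&Wood} and the Lemma. The only genuine work lies upstream, in the Lemma itself, whose proof rests on the Frankl--Wilson evaluation of $\alpha(K_q(n,k,t))$ from Result~\ref{Frankl&Wilson} (giving $\alpha=\gauss{n-t}{k-t}$ when $n\ge 2k$, which one may assume after the self-duality reduction $K_q(n,k,t)\cong K_q(n,n-k,n-2k+t)$) together with the short counting argument that a fixed $k$-vertex $A$ has more than $\gauss{n-t}{k-t}$ non-neighbours, obtained by counting $k$-subspaces through a $t$-subspace of $A$ and adding at least one further such subspace through a second $t$-subspace of $A$.
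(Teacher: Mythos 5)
Your proposal is correct and is exactly the argument the paper intends: the corollary follows immediately by combining Result~\ref{Harvey&Wood} with the preceding Lemma, since the Lemma shows $\Delta(\Gamma)\le|V(\Gamma)|-\alpha(\Gamma)-1$ and $|V(\Gamma)|=\gauss{n}{k}$. No gap; the paper gives no separate proof precisely because this is the whole content.
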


As mentioned before, it was proved in \cite{CaoLiuLuLv} that this upper bound is sharp when $n$ is sufficiently large compared to $k$ and $t$. We will improve this result by weakening the required bound on $n$ significantly. The approach in \cite{CaoLiuLuLv} and \cite{Harvey&Wood} uses a result of Robertson and Seymour on separators. We use the slightly different result (2.5) from the same paper and modify it in the following lemma. Recall that a component of a graph is a maximal connected induced subgraph of $\Gamma$. Hence, every vertex of the graph is a vertex of exactly one component.

\begin{lemma}[based on \cite{Robertson&Seymour}]\label{Robertson&Seymour}
Let $\Gamma$ be a finite graph. Then there exists a tree representation $(T,B)$ with $B=(B_t\mid t\in V(T))$ with width $\tw(\Gamma)$ and the following properties.
\begin{enumerate}[(a)]
\item If $t,t'\in T$ are adjacent in $T$, then $B_t$ is not a subset of $B_{t'}$.
\item For some $t\in T$, there exists a subset $P$ of $B_t$ such that $|P|\le \tw(\Gamma)+1$ and such that every component of $\Gamma\setminus P$ has at most $\frac12|V(\Gamma)\setminus P|$ vertices.
\end{enumerate}
\end{lemma}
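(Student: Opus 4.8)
The plan is to produce the required decomposition in two stages: first enforce (a) by a standard reduction, then obtain (b) by a centroid-type separator argument on the reduced tree (this is the argument underlying the separator result of \cite{Robertson&Seymour} referred to in the statement).

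For (a) I would start from any tree decomposition $(T,B)$ of $\Gamma$ of width $\tw(\Gamma)$ and, as long as some edge $\{t,t'\}$ of $T$ has $B_t\subseteq B_{t'}$, contract it and keep the larger bag. One checks routinely that (TD1) and (TD2) survive this operation, that the width cannot increase (and cannot drop below $\tw(\Gamma)$, so it stays equal to it), and that $|V(T)|$ strictly decreases; hence the process stops with (a) holding. If it stops with $|V(T)|=1$, then $B_t=V(\Gamma)$ and $|V(\Gamma)|=\tw(\Gamma)+1$, so $\Gamma$ is complete and $P:=V(\Gamma)=B_t$ makes (b) trivial. So from now on we may assume $|V(T)|\ge 2$ and (a).

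For (b), write $n=|V(\Gamma)|$ and recall the standard facts: for an edge $e=\{t,t'\}$ of $T$ the two subtrees of $T-e$ have bag-unions $Y_t\ni B_t$ and $Y_{t'}\ni B_{t'}$ with $Y_t\cup Y_{t'}=V(\Gamma)$, $Y_t\cap Y_{t'}=B_t\cap B_{t'}=:S_e$, and no edge of $\Gamma$ joins $Y_t\setminus S_e$ to $Y_{t'}\setminus S_e$; and for a node $s$ the branch sets $W_C=(\bigcup_{r\in C}B_r)\setminus B_s$, with $C$ ranging over the components of $T-s$, partition $V(\Gamma)\setminus B_s$, and every component of $\Gamma-B_s$ lies inside some $W_C$. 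Orient each edge $e=\{t,t'\}$ of $T$ toward $t'$ when $|Y_{t'}\setminus S_e|>|Y_t\setminus S_e|$ (ties broken arbitrarily); since $T$ is a tree there is a sink $t^\ast$. For a component $C$ of $T-t^\ast$, with edge $e_C$ to $t^\ast$ and separator $S_{e_C}$, the sink condition gives $|W_C|=|Y_{t'_C}\setminus S_{e_C}|\le |Y_{t^\ast}\setminus S_{e_C}|=n-|S_{e_C}|-|W_C|$, hence $2|W_C|\le n-|S_{e_C}|$. If the largest branch $W_1$ satisfies $|W_1|\le\frac12(n-|B_{t^\ast}|)$ we are done with $P:=B_{t^\ast}$: every component of $\Gamma-P$ lies in some $W_C$, so it has at most $|W_1|\le\frac12|V(\Gamma)\setminus P|$ vertices, while $|P|=|B_{t^\ast}|\le\tw(\Gamma)+1$.

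The \emph{main obstacle} is the remaining case $|W_1|>\frac12(n-|B_{t^\ast}|)$: the bare centroid bound $2|W_C|\le n-|S_{e_C}|$ only yields components of size $\le\frac12 n$, whereas (b) demands the sharper $\le\frac12(n-|P|)$. Here property (a) is essential --- it forbids $B_{t^\ast}\subseteq B_{t'_1}$, hence $|S_1|=|S_{e_1}|<|B_{t^\ast}|$ --- and the idea is to replace the full bag $B_{t^\ast}$ by a suitable subset $P$. Since $W_1$ dominates, the remaining branches of $t^\ast$ together span fewer than $\frac12(n-|B_{t^\ast}|)$ vertices, so the side of $e_1$ containing $B_{t^\ast}$ is small; one then aims to take $P$ to be $S_1$ (or $S_1$ suitably enlarged inside $B_{t^\ast}$, or the corresponding separator after moving $t^\ast$ one step toward $W_1$) and to bound every component of $\Gamma-P$ by $\frac12(n-|P|)$, using $2|W_1|\le n-|S_1|$ and $|S_1|<|B_{t^\ast}|$ to furnish the slack; property (a) also forces the size of the offending branch to drop strictly at each step of such a walk, which guarantees termination. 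Verifying that this accounting actually closes --- that (a) supplies enough room and the search cannot cycle --- is the step I expect to cost the most effort; the reduction in the first stage and the centroid inequality are routine.
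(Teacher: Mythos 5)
Your first stage (contracting edges with $B_x\subseteq B_y$ to enforce (a), and the trivial one-node/complete-graph case) is exactly the paper's argument. The divergence is in part (b), and there your proposal has a genuine gap at exactly the point you flag. The paper does not prove the balanced-separator statement combinatorially at all: after normalizing the decomposition so that (a) holds, it invokes claim (2.5) of Robertson and Seymour and merely observes that the set $P$ produced in their proof is a subset of some bag $B_t$, whence $|P|\le|B_t|\le\tw(\Gamma)+1$. You instead attempt a self-contained centroid argument, but the sink inequality $2|W_C|\le n-|S_{e_C}|$ (writing $n=|V(\Gamma)|$) only bounds components of $\Gamma\setminus B_{t^\ast}$ by $\frac12 n$, not by the required $\frac12|V(\Gamma)\setminus P|$, and your sketch of the remaining case does not close. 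Concretely, if $|W_1|>\frac12(n-|B_{t^\ast}|)$ and you take $P=S_1$, the components of $\Gamma\setminus S_1$ inside $W_1$ are indeed controlled by $2|W_1|\le n-|S_1|$, but a component on the other side of the edge $e_1$ may have up to $n-|S_1|-|W_1|$ vertices, and bounding this by $\frac12(n-|S_1|)$ would need $|W_1|\ge\frac12(n-|S_1|)$, which does not follow from $|W_1|>\frac12(n-|B_{t^\ast}|)$ because $|S_1|<|B_{t^\ast}|$ moves the threshold in the wrong direction. The further claim that (a) forces the offending branch to shrink strictly along a walk toward $W_1$, guaranteeing termination, is asserted but not proved; that accounting is essentially the entire content of the lemma. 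Note also that the hard case cannot be sidestepped: for the path $a$--$b$--$c$--$d$--$e$ with bags $\{a,b\},\{b,c\},\{c,d\},\{d,e\}$ no full bag satisfies (b), so $P$ really must be a proper subset of a bag (here $P=\{c\}$), and producing such a $P$ needs more than the centroid inequality.

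So either carry out the balanced-separator construction in full (which amounts to reproving Robertson--Seymour's (2.5) together with the extra feature $P\subseteq B_t$), or do as the paper does: keep your stage 1, cite (2.5), and verify from its proof that the separator it constructs lies inside a single bag of the decomposition already normalized by (a).
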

\begin{proof}
Let $(T,B)$ with $B=(B_t\mid t\in V(T))$ be a tree decomposition of $\Gamma$  with width $\tw(\Gamma)$. If there exist adjacent vertices $x,y$ of $T$ with $B_x\subseteq B_y$, let $T'$ be the graph obtained from $T$ by contracting the edge $\{x,y\}$ to a new vertex $t_0$, and define $B_{t_0}:=B_x$. Then $(T',(B_t\mid t\in V(T'))$ is a tree decomposition of $\Gamma$ with the same width as $(T,B)$ and one vertex less than $T$. Repeating this construction several times if necessary we finally obtain a tree decomposition satisfying the first condition of the lemma and having still tree width $\tw(\Gamma)$. We may thus assume that $(T,B)$ satisfies the first condition.

Claim (2.5) in \cite{Robertson&Seymour} states that there exists a subset $P$ of $V(\Gamma)$ such that every component of $\Gamma\setminus P$ has at most $\frac12|V(\Gamma)\setminus P|$ vertices. Using the above tree decomposition, the proof of (2.5) in \cite{Robertson&Seymour} in fact constructs such a set $P$ as a subset of some set $B_t$. Thus the second condition is satisfied.
\end{proof}

\section{A lower bound for the treewidth}

\begin{lemma}\label{guassestimate}
\label{boundsgauss_simple}
Consider integers $n,k,q$ with $n\ge k\ge 0$ and $q\ge 2$. Then
\begin{align*}
\gauss{n}{k}\le(q+\beta)q^{k(n-k)-1}.
\end{align*}
where $\beta=5$ for $q=2$, and $\beta=3$ for $q=3$ and $\beta=2$ for $q\ge 4$. Also, if  $0<k<n$, then $(q+1)q^{k(n-k)-1}\le\gauss{n}{k}$.
\end{lemma}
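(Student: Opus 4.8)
The plan is to expand the Gaussian coefficient as $\gauss{n}{k}=\prod_{i=1}^{k}\frac{q^{n-k+i}-1}{q^{i}-1}$ and to estimate it factor by factor; both inequalities then follow by elementary manipulations together with geometric-series bounds.

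For the lower bound, assume $0<k<n$. Each factor satisfies $\frac{q^{n-k+i}-1}{q^{i}-1}\ge q^{n-k}$, since after clearing denominators this is just $q^{n-k}\ge 1$. For the factor with $i=1$ one has the sharper estimate $\frac{q^{n-k+1}-1}{q-1}\ge(q+1)q^{n-k-1}$, which after clearing denominators reads $q^{n-k-1}\ge 1$ and holds because $n>k$. Multiplying the bound $(q+1)q^{n-k-1}$ for $i=1$ with the bound $q^{n-k}$ for each of the remaining $k-1$ factors gives $\gauss{n}{k}\ge(q+1)q^{(n-k-1)+(k-1)(n-k)}=(q+1)q^{k(n-k)-1}$, which is the claimed lower bound.

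For the upper bound the cases $k\in\{0,n\}$ are immediate, since there $\gauss{n}{k}=1$ and $(q+\beta)q^{-1}=1+\beta/q\ge 1$. So assume $0<k<n$. Bounding each numerator $q^{n-k+i}-1$ by $q^{n-k+i}$ gives $\gauss{n}{k}<q^{k(n-k)}\prod_{i=1}^{k}\frac{q^{i}}{q^{i}-1}=q^{k(n-k)}\big/\prod_{i=1}^{k}(1-q^{-i})$, so it is enough to show $\prod_{i=1}^{k}(1-q^{-i})\ge\frac{q}{q+\beta}$ for every $k\ge 1$. I would do this by isolating the first factor (or first few factors) and applying the elementary inequality $\prod_{j}(1-a_{j})\ge 1-\sum_{j}a_{j}$ to the remaining ones, bounding the tail $\sum_{i\ge m}q^{-i}$ by the geometric series. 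For $q\ge 3$ this gives $\prod_{i=1}^{k}(1-q^{-i})\ge(1-q^{-1})\bigl(1-\sum_{i\ge 2}q^{-i}\bigr)=1-q^{-1}-q^{-2}$, and one checks directly that $1-q^{-1}-q^{-2}\ge\frac{q}{q+\beta}$: for $q=3$ and $\beta=3$ this reads $\tfrac59\ge\tfrac12$, and for $q\ge 4$ and $\beta=2$ it simplifies to $q^{2}-3q-2\ge 0$, which holds for $q\ge 4$. For $q=2$ and $\beta=5$ one factor does not suffice, so I would peel off three: for $k\ge 3$ one gets $\prod_{i=1}^{k}(1-2^{-i})\ge\tfrac{21}{64}\bigl(1-\sum_{i\ge 4}2^{-i}\bigr)=\tfrac{147}{512}$, while for $k\in\{1,2\}$ the product is at least $\tfrac38$; and $\tfrac{147}{512}>\tfrac27$.

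All of these computations are short, and the only genuinely delicate point is the case $q=2$: there $\beta=5$ translates into the requirement $\prod_{i=1}^{k}(1-2^{-i})\ge\tfrac27\approx0.2857$ for all $k$, while $\prod_{i=1}^{\infty}(1-2^{-i})\approx0.2888$, so the margin is extremely thin. Consequently one must keep at least three explicit factors before passing to the geometric tail — peeling off only one or two yields the weaker constants $\tfrac14$ and $\tfrac{9}{32}$, both of which already fall below $\tfrac27$.
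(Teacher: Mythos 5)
Your proposal is correct, and it shares the paper's overall skeleton, but the finishing steps differ in a worthwhile way. For the lower bound you isolate the $i=1$ factor (bounded below by $(q+1)q^{n-k-1}$) and bound each remaining factor by $q^{n-k}$; the paper runs the identical computation packaged as an induction on $k$, with base case $k=1$ handled via the geometric sum, so there is no essential difference there. For the upper bound both arguments begin with $\gauss{n}{k}\le q^{k(n-k)}\prod_{i=1}^k \frac{q^i}{q^i-1}$, but then they diverge: the paper cites Lemma 34 of Ihringer--Metsch for $q\ge 3$ and, for $q=2$, checks $1\le k\le 5$ by hand and proves the stronger statement $\prod_{i=1}^k\frac{q^i}{q^i-1}\le 1+5\cdot\frac{q^{k-1}-2}{q^k-2}$ by induction for $k\ge 6$; you instead reduce to $\prod_{i=1}^k(1-q^{-i})\ge\frac{q}{q+\beta}$ and settle it uniformly with the Weierstrass-type inequality $\prod_j(1-a_j)\ge 1-\sum_j a_j$ plus a geometric tail. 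Your arithmetic checks out, including the delicate case $q=2$, where keeping three explicit factors yields $\frac{21}{64}\cdot\frac78=\frac{147}{512}>\frac27$ (indeed $7\cdot147=1029>1024=2\cdot512$), and you correctly observe that peeling off only one or two factors would not suffice. What your route buys is a self-contained, uniform argument for all $q$ that avoids the external reference and the ad hoc induction; what the paper's route buys is a slightly sharper intermediate bound for $q=2$, which is not needed for the lemma itself.
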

\begin{proof}
We first prove the lower bound for the Gaussian coefficient for $0<k<n$ by induction on $k$. For $k=1$ we have $$\gauss{n}{k}=\sum_{i=0}^{n-1}q^i\ge(q+1)q^{n-2}$$ as required. For $k\ge 2$, we have $$\gauss{n}{k}=\frac{q^n-1}{q^k-1}\gauss{n-1}{k-1}\ge q^{n-k}\gauss{n-1}{k-1}$$ and the induction hypothesis shows that this is at least $(q+1)q^{k(n-k)-1}$.

For $q\ge 3$, the upper bound was proved in \cite[Lemma 34]{Ihringer&Metsch}. Now consider $q=2$. We have to prove the upper bound, which we do using the technique of \cite[Lemma 34]{Ihringer&Metsch}. From the definition of the Gaussian coefficient, we have
\begin{align}
    \gauss{n}{k} &= \prod_{i=1}^k \frac{q^{n-k+i}-1}{q^i-1} \leq \prod_{i=1}^k \frac{q^{n-k+i}}{q^i-1} = q^{k(n-k)} \prod_{i=1}^k \frac{q^i}{q^i-1}.\label{lem_upper_gauss_ineq1}
\end{align}
It suffices therefore to show that $\prod_{i=1}^k \frac{q^i}{q^i-1}\le 1+\frac\beta q=\frac72$. For $1\le k\le 5$, one verifies this by hand. For $k\ge 6$, one proves easily by induction on $k$ the stronger statement $\prod_{i=1}^k \frac{q^i}{q^i-1}\le 1+5\cdot\frac{q^{k-1}-2}{q^k-2}$.
\end{proof}

\begin{lemma}\label{numberofpairs}
Let $K_1$ and $K_2$ be two subspaces of dimension $k$ of an $F_q$-vector space and let $s$ be the dimension of their intersection.
Then for every integer $i$ with $0\le i\le s$ the number of pairs $(T_1,T_2)$ consisting of a $t$-subspace $T_1$ of $K_1$ and a $t$-subspace $T_2$ of $K_2$ such that $\dim(T_1\cap T_2)=i$ is at most
\begin{align*}
\gauss{s}{i}\gauss{k-i}{t-i}^2.
\end{align*}
\end{lemma}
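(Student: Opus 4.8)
The plan is to organize the count according to the subspace $W:=T_1\cap T_2$. First I would use that $T_1\subseteq K_1$ and $T_2\subseteq K_2$ force any common subspace into $K_1\cap K_2$; hence if $\dim(T_1\cap T_2)=i$, then $W$ is an $i$-dimensional subspace of the $s$-dimensional space $S:=K_1\cap K_2$. The pairs to be counted therefore split, over the $\gauss{s}{i}$ choices of such a $W$, into the sets of pairs $(T_1,T_2)$ with $T_1\cap T_2=W$. For the bound it suffices to count, for each fixed $W$, the pairs $(T_1,T_2)$ merely satisfying $W\subseteq T_1$ and $W\subseteq T_2$, which can only increase the total.

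Second, I would fix an $i$-dimensional $W\le S$ and count $t$-subspaces $T_1$ of $K_1$ with $W\subseteq T_1$: these correspond bijectively to $(t-i)$-dimensional subspaces of the quotient $K_1/W$, which has dimension $k-i$, so there are $\gauss{k-i}{t-i}$ of them. By symmetry there are $\gauss{k-i}{t-i}$ choices for $T_2$, giving at most $\gauss{k-i}{t-i}^2$ pairs for each $W$. Multiplying by the number $\gauss{s}{i}$ of admissible $W$ yields the claimed upper bound $\gauss{s}{i}\gauss{k-i}{t-i}^2$.

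There is no genuine obstacle here; the only things to state carefully are that the partition over $W$ (with the exact condition $T_1\cap T_2=W$) is what turns into an inequality once we relax to $W\subseteq T_1\cap T_2$, and that the degenerate parameter ranges (for instance $i=t$, or $t-i>k-i$, or $i>s$) are absorbed by the usual convention that the Gaussian coefficient $\gauss{a}{b}$ vanishes when $b<0$ or $b>a$.
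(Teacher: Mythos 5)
Your proposal is correct and is essentially the paper's own argument: classify each pair by the $i$-subspace $W=T_1\cap T_2$ of $K_1\cap K_2$ (at most $\gauss{s}{i}$ choices), then bound the pairs through each $W$ by $\gauss{k-i}{t-i}^2$ via the containment relaxation. No substantive difference from the paper's proof.
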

\begin{proof}
For any such pair, the intersection of $T_1$ and $T_2$ is a subspace of the $s$-subspace $K_1\cap K_2$. Now $K_1\cap K_2$ has $\gauss{s}{i}$ $i$-subspaces and each such $i$-subspace lies in $\gauss{k-i}{t-i}$ $t$-subspaces of $K_1$ and in as many of $K_2$. The statement follows.
\end{proof}

\begin{lemma}\label{parabola}
Let $f$ be a real quadratic polynomial with leading coefficient $-1$, and let $f$ obtain its maximum for $x=x_0$. Let $q$ and $a$ be integers and suppose that $q\ge 2$.
\begin{enumerate}[(a)]
\item If $x_0\le a$, then $\sum_{i=a}^\infty q^{f(i)}<q^{f(a)}(1+\frac1q+\frac1{q^3})$.
\item If $x_0\ge a$, then $\sum_{i=-\infty}^aq^{f(i)}<q^{f(a)}(1+\frac1q+\frac1{q^3})$.
\item If $2x_0$ is an integer, then $\sum_{i\in\mathbb{Z}}q^{f(i)}< q^{f(x_0)}(1+\frac2q+\frac2{q^3})$.
\end{enumerate}
\end{lemma}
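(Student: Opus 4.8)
The plan is to write $f(x)=f(x_0)-(x-x_0)^2$ and, in each case, to compare the sum with a geometric series. For part (a), let $a$ be an integer with $x_0\le a$ and substitute $i=a+j$ with $j\ge 0$. Then
\begin{align*}
f(a+j)=f(a)-2j(a-x_0)-j^2\le f(a)-j^2,
\end{align*}
because $a-x_0\ge 0$. Hence $\sum_{i=a}^\infty q^{f(i)}\le q^{f(a)}\sum_{j\ge 0}q^{-j^2}$, so it suffices to show $\sum_{j\ge 0}q^{-j^2}=1+q^{-1}+q^{-4}+q^{-9}+\cdots<1+q^{-1}+q^{-3}$ for $q\ge 2$. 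Since $j^2\ge 2j$ for $j\ge 2$, the tail is at most $\sum_{j\ge 2}q^{-2j}=\frac{1}{q^2(q^2-1)}$, which is smaller than $q^{-3}$ because $q^2-q-1>0$. Part (b) is identical after substituting $i=a-j$: one gets $f(a-j)=f(a)-2j(x_0-a)-j^2\le f(a)-j^2$ since now $x_0-a\ge 0$, and the same tail estimate finishes it.

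For part (c) I would first note that replacing $f$ by $x\mapsto f(x+m)$ for a suitable integer $m$ leaves both $\sum_{i\in\mathbb Z}q^{f(i)}$ and $q^{f(x_0)}$ unchanged, so one may assume $x_0\in\{0,\tfrac12\}$. If $x_0=0$, then $\sum_{i\in\mathbb Z}q^{f(i)}=q^{f(0)}\bigl(1+2\sum_{j\ge 1}q^{-j^2}\bigr)$, and the tail estimate from part (a) gives a bound strictly below $q^{f(0)}(1+2q^{-1}+2q^{-3})$. If $x_0=\tfrac12$, then as $i$ runs through $\mathbb Z$ the number $(i-\tfrac12)^2$ takes each value $(j+\tfrac12)^2=j(j+1)+\tfrac14$, $j\ge 0$, exactly twice, so
\begin{align*}
\sum_{i\in\mathbb Z}q^{f(i)}=2q^{f(1/2)-1/4}\sum_{j\ge 0}q^{-j(j+1)}.
\end{align*}
Using $j(j+1)\ge 2j$ together with a geometric tail one gets $\sum_{j\ge 0}q^{-j(j+1)}=1+q^{-2}+q^{-6}+\cdots<1+q^{-2}+q^{-5}$, and it remains to verify the one-variable inequality
\begin{align*}
2q^{-1/4}\bigl(1+q^{-2}+q^{-5}\bigr)<1+2q^{-1}+2q^{-3}\qquad\text{for every prime power }q\ge 2.
\end{align*}

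I expect this last inequality to be the only real obstacle: it has little slack — the margin is only a couple of percent, smallest for $q$ near $5$ — so the tail bound on $\sum q^{-j(j+1)}$ cannot be loosened much (a crude bound such as replacing the whole sum by $\frac{q^2}{q^2-1}$ fails already at $q=2$). For large $q$ it is easy, since $2q^{-1/4}(1+q^{-2}+q^{-5})<\frac{2q^{7/4}}{q^2-1}=\frac{2q^{-1/4}}{1-q^{-2}}$, a product of two decreasing factors, which is already below $1$ once $q\ge 17$ (equivalently $2q^{7/4}<q^2-1$, checked at $q=17$). The remaining prime powers $q\le 16$ are checked one by one; the tightest is $q=2$, where the left-hand side equals $\tfrac{41}{32}\cdot 2^{3/4}<2.2<2.25$. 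Apart from this numerical check, the proof consists only of the geometric-series comparisons in (a), (b), and in the two subcases of (c).
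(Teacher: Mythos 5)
Your proposal is correct and follows essentially the same route as the paper: write $f(x)=f(x_0)-(x-x_0)^2$, compare with geometric-type tails for (a) and (b), split (c) into the integer and half-integer cases (the latter giving $2q^{f(x_0)-1/4}\sum_{j\ge0}q^{-j(j+1)}$), and reduce to the same numerical inequality $2q^{-1/4}(1+q^{-2}+q^{-5})<1+\frac2q+\frac2{q^3}$, verified for small $q$ by hand and for large $q$ by a monotone bound. Two minor quibbles that do not affect the result: the estimate $j(j+1)\ge 2j$ only yields a tail of $\frac{1}{q^2(q^2-1)}$, which exceeds $q^{-5}$, so to get the stated $1+q^{-2}+q^{-5}$ you should bound the tail as the paper does via $\sum_{j\ge2}q^{-j(j+1)}\le q^{-6}\sum_{i\ge0}q^{-i}\le q^{-5}$; and, contrary to your side remark, the crude bound $\sum_{j\ge0}q^{-j(j+1)}\le\frac{q^2}{q^2-1}$ actually still (barely) closes the case $q=2$, since $2\cdot 2^{-1/4}\cdot\frac43\approx 2.242<2.25$.
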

\begin{proof}
We use several times that $\sum_{i=0}^\infty q^{-i}=q/(q-1)\le q$.

(a) Since $x_0\le a$, then $f(a+i)\le f(a)-i^2$ for all $i\ge 0$ and hence
\begin{align*}
\sum_{i=a}^\infty q^{f(i)}\le q^{f(a)}\sum_{i=0}^\infty q^{-i^2}
<q^{f(a)}\left(1+\frac1q+q^{-4}\sum_{i=0}^\infty q^{-i}\right)
\le q^{f(a)}\left(1+\frac1q+\frac1{q^3}\right).
\end{align*}

(b) This follows from (a) and the symmetry of $f$.

(c) If $x_0$ is an integer, this follows from (a) and (b) applied with $a=x_0$. Now consider the case when $x_0$ is not an integer but $2x_0$ is. If $z\in\mathbb{Z}$, then $i:=z-x_0-\frac12\in\mathbb{Z}$ and $f(z)=f(x_0)-(\frac12+i)^2$. Hence
\begin{align*}
\sum_{z\in\mathbb{Z}} q^{f(z)}&=q^{f(x_0)}\sum_{i\in\mathbb{Z}}q^{-(1/2+i)^2}
=2q^{f(x_0)-\frac14}\sum_{i=0}^\infty q^{-i(i+1)}
\\
& \le2q^{f(x_0)-\frac14}(1+q^{-2}+q^{-6}\sum_{i=0}^\infty q^{-i})
\\
&
\le 2q^{f(x_0)-\frac14}(1+q^{-2}+q^{-5})
\\
&<q^{f(x_0)}(1+\frac2q+\frac2{q^3})
\end{align*}
where the last step is obvious for $q\ge 16$ and easily checked for $2\le q\le 15$.
\end{proof}

The next theorem is a reformulation of Theorem \ref{main1}.

\begin{theorem}\label{mainwork}
Let $\Gamma=K_q(n,k,t)$ with $n\ge 2k$ and $k>t\ge 1$. Assume that $\tw(\Gamma)\not=\gauss{n}{k}-\alpha(\Gamma)-1$. Then the following hold where $\epsilon$ is the function defined in the introduction.
\begin{enumerate}[(a)]
\item If $t\le \epsilon(q)$, then $n\le 3k-2t+\epsilon(q)$.
\item If $t>\epsilon(q)$, then $n\le 3k-t+1-2\sqrt{t-\epsilon(q)}$.
\end{enumerate}
\end{theorem}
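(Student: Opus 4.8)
The plan is to prove the contrapositive via a balanced separator, reduced to a cross‑intersection count that is then estimated through Lemmas~\ref{numberofpairs}, \ref{guassestimate} and \ref{parabola}.

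\textbf{Setting up the separator.} Assume $\tw(\Gamma)\ne\gauss nk-\alpha(\Gamma)-1$. Since $n\ge 2k$ gives $\alpha(\Gamma)=\gauss{n-t}{k-t}$ (Result~\ref{Frankl&Wilson}), Corollary~\ref{upperbound} forces $\tw(\Gamma)\le\gauss nk-\alpha(\Gamma)-2$. I would apply Lemma~\ref{Robertson&Seymour} to get a set $P$ with $|P|\le\tw(\Gamma)+1\le\gauss nk-\alpha(\Gamma)-1$ such that every component of $\Gamma\setminus P$ has at most $\tfrac12|V(\Gamma)\setminus P|$ vertices. Put $M:=|V(\Gamma)\setminus P|=\gauss nk-|P|\ge\alpha(\Gamma)+1$. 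As $M$ exceeds the independence number, $\Gamma[V(\Gamma)\setminus P]$ has an edge; let $j$ be the least value of $\dim(U\cap U')$ over its edges $\{U,U'\}$, so $0\le j\le t-1$ (and, by minimality of $j$, $V(\Gamma)\setminus P$ is a $j$-intersecting family). Fix an edge $\{A,B\}$ with $\dim(A\cap B)=j$, let $C$ be the component of $\Gamma\setminus P$ containing $A$ and $B$, and set $D:=V(\Gamma)\setminus(P\cup C)$. Every vertex of $D$ is non-adjacent to $A$ and to $B$, hence meets each of them in dimension $\ge t$; and $|D|=M-|C|\ge\tfrac12 M\ge\tfrac12(\alpha(\Gamma)+1)>\tfrac12\alpha(\Gamma)$.

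\textbf{Bounding $|D|$ from above.} For $U\in D$ choose $t$-subspaces $T_1\le U\cap A$ and $T_2\le U\cap B$; then $T_1+T_2\le U$ and $T_1\cap T_2\le A\cap B$, so $i:=\dim(T_1\cap T_2)$ lies in $[\max(0,2t-k),j]$. Counting each such $U$ through a pair $(T_1,T_2)$ — at most $\gauss ji\gauss{k-i}{t-i}^2$ pairs of fixed intersection dimension $i$ by Lemma~\ref{numberofpairs}, each lying in $\gauss{n-2t+i}{k-2t+i}$ subspaces of dimension $k$ — gives
\[
|D|\le\sum_{i=\max(0,2t-k)}^{j}\gauss ji\gauss{k-i}{t-i}^2\gauss{n-2t+i}{k-2t+i}.
\]
Estimating each Gaussian coefficient by $(q+\beta)q^{\bullet-1}$ via Lemma~\ref{guassestimate} replaces the right side by $(q+\beta)^4q^{-4}\sum_i q^{F(i)}$, where
\[
F(i)=i(j-i)+2(t-i)(k-t)+(k-2t+i)(n-k)=-i^2+i(n+j+2t-3k)+2t(k-t)+(k-2t)(n-k)
\]
is a downward parabola in $i$ with leading coefficient $-1$ and apex at $i_0=\tfrac12(n+j+2t-3k)$, with $2i_0\in\mathbb Z$. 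Lemma~\ref{parabola} then bounds $\sum_i q^{F(i)}$ by $(1+O(1/q))\,q^{F(i^\ast)}$, where $i^\ast$ is $i_0$ clamped into $[\max(0,2t-k),j]$ (part~(c) if the apex is interior, parts~(a)/(b) otherwise).

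\textbf{Extracting the bound on $n$.} Combining with $|D|>\tfrac12\alpha(\Gamma)\ge\tfrac12(q+1)q^{(k-t)(n-k)-1}$ (lower estimate of Lemma~\ref{guassestimate}, valid since $0<k-t<n-t$) and taking $\log_q$ gives $(k-t)(n-k)<F(i^\ast)+c(q)$ for an explicit constant $c(q)$. Using $(k-t)(n-k)-F(j)=(t-j)(n-3k+2t)$ and $(k-t)(n-k)-F(i_0)=t(n-3k+2t)-i_0^{\,2}$, this splits in two: if $i^\ast=j$ (apex at or beyond the right endpoint, which needs $n\ge 3k-2t+j$) one gets $(t-j)(n-3k+2t)<c(q)$, hence a bound $n<3k-2t+(\text{constant})$ since $t-j\ge 1$; if $i^\ast=i_0$ (apex interior, which forces $n\le 3k-2t+j$) one gets the quadratic $t(n-3k+2t)-\big(\tfrac12(n+j+2t-3k)\big)^2<c(q)$, whose relevant root is the smaller one, giving $n<3k-j-2\sqrt{\,t(t-j)-(\text{constant})}$. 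In each case the right side is maximised over $0\le j\le t-1$ at $j=t-1$, turning the two shapes into $n\le 3k-2t+(\text{constant})$ and $n\le 3k-t+1-2\sqrt{\,t-(\text{constant})}$; combining with the case hypotheses and the regime constraints should produce exactly $n\le 3k-2t+\epsilon(q)$ for $t\le\epsilon(q)$ and $n\le 3k-t+1-2\sqrt{t-\epsilon(q)}$ for $t>\epsilon(q)$.

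The hard part will be these last two steps: pushing $c(q)$ down to precisely what is needed — the estimates of Lemma~\ref{guassestimate} are not simultaneously sharp at the optimal index, so further savings, and the exact link between the parameter $\beta$ and the function $\epsilon$, must be tracked carefully — and disposing of the remaining sub-cases (the apex falling below $\max(0,2t-k)$, small $t$, and the boundary case $k-t=1$), where the resulting bound is only stronger but still has to be checked against the stated $\epsilon(q)$.
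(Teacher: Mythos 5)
Your skeleton is the paper's own argument: the balanced separator from Lemma \ref{Robertson&Seymour}, an edge $\{A,B\}$ inside the complement of $P$, the double count of pairs of $t$-subspaces $(T_1,T_2)$ with $T_i\le A\cap U$, $B\cap U$ over the vertices $U$ outside the component of $A,B$, the estimate via Lemmas \ref{numberofpairs} and \ref{guassestimate}, and the parabola Lemma \ref{parabola} applied to the exponent (the paper simply bounds $\dim(A\cap B)\le t-1$ in the exponent at once, which is equivalent to your ``optimize over $j$ at $j=t-1$'' at the end). Up to that point the proposal is sound.

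The genuine gap is precisely the part you defer: nothing in the proposal shows that the additive constant comes out to be exactly $\epsilon(q)$, and the mechanism the paper uses for this is absent. After comparing with $\tfrac12\alpha(\Gamma)\ge\tfrac12(q+1)q^{(k-t)(n-k)-1}$ and the parabola lemma, the analytic chain only yields a strict inequality of the form $f(i_0)>-\epsilon(q)-\tfrac34$, where one must first verify numerically that
\begin{align*}
q^{-\epsilon(q)-\frac34}\Bigl(1+\frac2q+\frac2{q^3}\Bigr)<\frac{(q+1)q^3}{2(q+\beta)^4}
\end{align*}
(this is the exact link between $\beta$ and $\epsilon$ that you flag but do not establish), and then one needs the integrality observation that $f$ takes integer values at the integer endpoints and that $f(i_0)=\tfrac14(3k+1-t-n)^2-t$ lies in $\mathbb{Z}\cup(\mathbb{Z}+\tfrac14)$, so the strict bound can be rounded up to $f(i_0)\ge-\epsilon(q)$ (respectively $f(t-1)\ge-\epsilon(q)$ or $f(i_{\max})\ge-\epsilon(q)$ when the apex is outside the range). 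Without this rounding your $c(q)$ carries a loss of up to $\tfrac34$ inside the square root and the conclusion would be weaker than the stated $n\le 3k-t+1-2\sqrt{t-\epsilon(q)}$. Finally, the assignment of the two resulting bounds to hypotheses (a) and (b) is not just ``maximise over $j$'': the paper splits on $n\ge 3k-t-1$ versus $n\le 3k-t-2$, and in the first case the inequality $n\le 3k-2t+\epsilon(q)$ forces $t\le\epsilon(q)+1$, with the borderline value $t=\epsilon(q)+1$ having to be checked separately to land in conclusion (b); your proposal leaves this bookkeeping, and with it the actual statement of the theorem, unproved.
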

\begin{proof}
Lemma \ref{Robertson&Seymour} shows that there exists a subset $P$ of $V(\Gamma)$ such that $|P|\le \tw(\Gamma)+1$ and such that every component of $\Gamma\setminus P$ has at most $\frac12|V(\Gamma)\setminus P|$ vertices. Consider $Y:=V(\Gamma)\setminus P$.

Since $\tw(\Gamma)\not=\gauss{n}{k}-\alpha(\Gamma)-1$, Corollary \ref{upperbound} shows that $\tw(\Gamma)\le \gauss{n}{k}-\alpha(\Gamma)-2$. Since $|P|\le \tw(\Gamma)+1$, it follows that $|Y|\ge \alpha(\Gamma)+1$. Hence $Y$ is not an independent set of $\Gamma$ and thus $\Gamma$ has an edge whose vertices $S_1$ and $S_2$ are in $Y$. Let $X$ be the vertex set of the component of $\Gamma\setminus P$ that contains $S_1$ and $S_2$. Then $|X|\le \frac12|Y|$. Hence $|Y\setminus X|\ge \frac12|Y|>\frac12\alpha(\Gamma)$.

As vertices of the graph $\Gamma$, $S_1$ and $S_2$ are connected, so as $k$-subspaces we have $s:=\dim(S_1\cap S_2)\le t-1$.
As vertices of the graph, the elements of $Y\setminus X$ are neither adjacent to $S_1$ nor to $S_2$, so as $k$-subspaces, every element of $Y\setminus X$ meets each of the $k$-subspaces $S_1$ and $S_2$ in a subspace of dimension at least $t$. We define
\begin{align*}
M&:=\{(T_1,T_2)\mid \text{$T_i$ is a subspace of $S_i$ with $\dim(T_i)=t$, $i=1,2$}\}
\\
C&:=\{((T_1,T_2),K)\in M\times(Y\setminus X)\mid T_1,T_2\subseteq K\}.
\end{align*}
As every element of $Y\setminus X$ meets $S_1$ and $S_2$ in subspaces of dimension at least $t$, then every element of $Y\setminus X$ occurs in a pair of $C$ and hence $|C|\ge |Y\setminus X|>\frac12\alpha(\Gamma)$.

Consider an element $(T_1,T_2)$ of $M$ that occurs in at least one pair of $C$. Then $\dim(T_1+T_2)\le k$ and hence $\dim(T_1\cap T_2)\ge 2t-k$. As $T_1\cap T_2\subseteq S_1\cap S_2$ then $\dim(T_1\cap T_2)\le s$. Hence, if $i=\dim(T_1\cap T_2)$, then $2t-k\le i\le s$ and the number of $k$-subspaces of $F_q^n$ that contain $T_1$ and $T_2$ is $\gauss{n-2t+i}{k-2t+i}$; consequently $(T_1,T_2)$ lies in at most this many pairs of $C$. Hence with
\begin{align*}
i_{\max}&:=\max\{0,2t-k\}
\end{align*}
we deduce from \ref{Frankl&Wilson} and \ref{numberofpairs} that
\begin{align}\label{eqn_asdrfe}
\frac12\gauss{n-t}{k-t}\stackrel{\ref{Frankl&Wilson}}{=}\frac12\alpha(\Gamma)<|C|\stackrel{\ref{numberofpairs}}{\le} \sum_{i=i_{\max}}^s\gauss{s}{i}\gauss{k-i}{t-i}^2\gauss{n-2t+i}{k-2t+i}.
\end{align}
We define
\begin{align*}
\beta:=\begin{cases}
5 & \text{if } q=2,
\\
3 & \text{if } q=3,
\\
2 & \text{if } q\ge 4,
\end{cases}
\end{align*}
and apply Lemma \ref{guassestimate} to both sides of inequality \eqref{eqn_asdrfe} to find
\begin{align*}
\frac12(q+1)q^{(n-k)(k-t)-1}\le \sum_{i=i_{\max}}^s(q+\beta)^4q^{(s-i)i-1}(q^{(k-t)(t-i)-1})^2q^{(n-k)(k-2t+i)-1}.
\end{align*}
Using $s\le t-1$ it follows that
\begin{align*}
\frac{(q+1)q^3}{2(q+\beta)^4}\le &\sum_{i=i_{\max}}^sq^{(s-i)i}q^{2(k-t)(t-i)}q^{(n-k)(-t+i)}
\\
\le &\sum_{i=i_{\max}}^{t-1}q^{(t-1-i)i}q^{2(k-t)(t-i)}q^{(n-k)(-t+i)}
\\
= &\sum_{i=i_{\max}}^{t-1}q^{(t-i)(i+3k-2t-n)-i}.
\end{align*}
Using the definition of $\beta$ and $\epsilon(q)$ it is straightforward to check that
\begin{align}
q^{-\epsilon(q)-\frac34}(1+\frac2q+\frac2{q^3})<\frac{(q+1)q^3}{2(q+\beta)^4}.
\end{align}
Defining the quadratic function $f(i)=(t-i)(i+3k-2t-n)-i$ we thus find
\begin{align}\label{eqncrucial}
q^{-\epsilon(q)-\frac34}(1+\frac2q+\frac2{q^3})<\sum_{i=i_{\max}}^{t-1}q^{f(i)}.
\end{align}
The parabola $f$ reaches its maximum for $i=i_0$ where
\begin{align}\label{fi0}
i_0:=\frac12(n-3k+3t-1)\ \text{and} \ \ f(i_0)=\frac14(3k+1-t-n)^2-t.
\end{align}
Lemma \ref{parabola} (c) implies that the right hand side of \eqref{eqncrucial} is at most $(1+\frac2q+\frac2{q^3})q^{f(i_0)}$. Therefore \eqref{eqncrucial} implies that $f(i_0)>-\frac34-\epsilon(q) $. Since $f(i_0)$ or $f(i_0)-\frac 14$ is an integer, it follows that $f(i_0)\ge -\epsilon(q)$. If $i_0\le i_{\max}$ or $i_0\ge t-1$, we can improve this by applying (a) or (b) of Lemma \ref{parabola}. Since $f(i_{\max})$ and $f(t-1)$ are integers, we find
\begin{align}\label{cases}
-\epsilon(q)\le
\begin{cases}
f(i_0) & \text{in any case,}
\\
f(t-1) & \text{if $t-1\le i_0$,}
\\
f(i_{\max}) & \text{$i_0\le i_{\max}$.}
\\
\end{cases}
\end{align}
Case 1. Here we consider the situation when $n\ge 3k-t-1$.

Then \eqref{fi0} gives $i_0\ge t-1$ so \eqref{cases} implies that $-\epsilon(q)\le f(t-1)$. Since $f(t-1)=3k-2t-n$, it follows that $n\le 3k-2t+\epsilon(q)$. Hence $3k-t-1\le n\le 3k-2t+\epsilon(q)$, which implies that $t\le \epsilon(q)+1$. If $t\le \epsilon(q)$, then we are in situation (a) of the statement, and if $t=\epsilon(q)+1$, we are situation (b) of the statement. Hence, in this case the theorem is proved.

Case 2. Here we consider the case that $n\le 3k-t-2$.

If $t\le \epsilon(q)$, then we have $n\le 3k-t-2\le 3k-2t+\epsilon(q)$ and we are in situation (a) of the statement. Suppose now that $t>\epsilon(q)$. From \eqref{cases} we find $-\epsilon(q)\le f(i_0)$, that is
    \begin{align*}
    t-\epsilon(q)\le \frac14(3k+1-t-n)^2.
    \end{align*}
Since $n\le 3k-t-2$ and $t>\epsilon(q)$, it follows that $n\le 3k-t+1-2\sqrt{t-\epsilon(q)}$. Now we are situation (b) of the statement. Hence, also in this case the theorem is proved.
\end{proof}

Theorem \ref{main1} follows immediately from Theorem \ref{mainwork}. Corollary \ref{maincor1} follows from Theorem \ref{main1}.

{\bf Proof of Corollary \ref{maincor2}}. Consider a Kneser graph $K_q(n,k,t)$ as in \ref{maincor2}, that is with $q\ge 9$ and $t>k+3-2\sqrt{k+2}$. From $t>k+3-2\sqrt{k+2}$ it follows that $k-t+1<2\sqrt{t}$. Hence in part (b) of Theorem \ref{main1} we have $3k-t+1-2\sqrt{t}\le 2k$. Therefore \ref{maincor2} follows from Theorem \ref{main1}.

\section{The Kneser graph $K_q(4,2,1)$}

The graph $K_q(n,k,t)$ with $t=k-1$ is the complement of a Grassmann graph. This graph attracted special attention in \cite{CaoLiuLuLv} where it was shown for all $n\ge k\ge 2$ with the exception $(n,k)=(4,2)$ that the graph $K_q(n,k,k-1)$ has treewidth $|V(\Gamma)|-\alpha(\Gamma)-1$. The case $K_q(4,2,1)$ remained unsolved in \cite{CaoLiuLuLv} and also our counting argument from the last section is not strong enough to cover this case. We will choose a special model for this graph and use geometric properties to determine its treewidth. The case $q=2$ will lead to extra difficulties, whereas the case $q\ge 3$ only requires some basic knowledge of tree decompositions.

Using projective geometry, the graph $K_q(4,2,1)$ can be understood as the graph whose vertices are the lines of the projective space $\PG(3,q)$ with two vertices adjacent if and only if the lines are skew. Using the Klein-correspondence from $\PG(3,q)$ to the hyperbolic quadric $Q^+(5,q)$, we can define the same graph as follows.
Its vertices are the points of $Q^+(5,q)$ and two vertices are non-adjacent if and only if the lie on a line of $Q^+(5,q)$. We consider $Q^+(5,q)$ naturally embedded in $\PG(5,q)$ and denote the related polarity of $\PG(5,q)$ by $\perp$. Then two points $v$ and $v'$ of $Q^+(5,q)$ are adjacent as vertices of the graph if and only if the points are not perpendicular, that is $v'$ is not a point of the tangent hyperplane $v^\perp$ at $v$. See \cite{Hirschfeld&Thas} for properties of the hyperbolic quadric and the fact that this graph is indeed isomorphic to $K_q(4,2,1)$. Using this model we will show that $\tw(\Gamma)=|V(\Gamma)|-\alpha(\Gamma)-1$. We need two lemmata of preparation, the first one is an easy observation of the hyperbolic quadric $Q^+(3,q)$, which occurs in $Q^+(5,q)$ as an intersection of $Q^+(5,q)$ with $\ell^\perp$, where $\ell$ is any secant line to the quadric.

\begin{lemma}\label{hyperbolicquadric}
The largest sets of points of the hyperbolic quadric $Q^+(3,q)$ without three pairwise non-collinear points are the unions of two skew lines with $2q+2$ points.
\end{lemma}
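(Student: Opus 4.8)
The plan is to work inside $Q^+(3,q) \subseteq \PG(3,q)$, whose point set carries two reguli (families) of $q+1$ mutually skew lines, say $\mathcal{R}_1 = \{a_0,\dots,a_q\}$ and $\mathcal{R}_2 = \{b_0,\dots,b_q\}$, with each $a_i$ meeting each $b_j$ in exactly one point, these $(q+1)^2$ intersection points being precisely the points of the quadric. Two points of $Q^+(3,q)$ are collinear on the quadric if and only if they share a line of $\mathcal{R}_1$ or a line of $\mathcal{R}_2$. So if $p_{ij}$ denotes the point $a_i \cap b_j$, then identifying a point set $S$ with a subset of the $(q+1)\times(q+1)$ grid, three points are pairwise non-collinear exactly when they occupy three distinct rows and three distinct columns. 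First I would reformulate the problem as: find the largest subset $S$ of a $(q+1)\times(q+1)$ grid containing no three cells in ``general position'' (no three in distinct rows and distinct columns). The union of two skew lines corresponds either to two full rows, two full columns, or one full row together with one full column, each giving $2(q+1) = 2q+2$ cells (note a full row and full column overlap in one cell, but a row of $\mathcal{R}_1$ and a column of $\mathcal{R}_2$ as \emph{lines} are skew and their union as a point set of the quadric still has $2q+2$ points once we recall the grid picture—two lines from the same regulus are genuinely disjoint, while a line from each regulus meet; I would state the union-of-two-skew-lines configurations as: two lines of $\mathcal{R}_1$, or two lines of $\mathcal{R}_2$, all of which are skew pairs, each yielding exactly $2q+2$ points).

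The core combinatorial claim is then: any subset $S$ of the grid with $|S| \geq 2q+3$ contains three cells in general position, and moreover any $S$ with $|S| = 2q+2$ and no three in general position must be two full rows or two full columns. I would prove the bound by a counting/extremal argument on the row-sums. Let $r_i = $ number of chosen cells in row $i$. If at least three rows each contain a cell, pick the three rows with the largest $r_i$; a short argument using a system-of-distinct-representatives / Hall-type selection shows that unless the chosen cells are confined to at most two columns, we can pick three cells in distinct rows and distinct columns. So if no three cells are in general position, then either all chosen cells lie in at most two rows, or all lie in at most two columns. In the first case $|S| \leq 2(q+1) = 2q+2$ with equality only if two rows are completely filled; symmetrically in the second case. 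This gives both the bound and the characterization of equality.

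The main obstacle I expect is the careful handling of the ``at most two columns'' degenerate branch and making the Hall-type selection argument airtight: one must check that when the cells do \emph{not} all lie in two rows and do \emph{not} all lie in two columns, three pairwise non-collinear points genuinely exist. Concretely, if there are cells in at least three rows and in at least three columns, I would argue as follows: take any cell $p$ in some row $i$; delete row $i$ and the column of $p$; if something remains there is a cell $p'$ in a new row and new column; delete $p'$'s row and column too; since originally three rows and three columns were occupied, after removing two rows and two columns a third cell $p''$ survives, giving three in general position. The bookkeeping—ensuring the deletions remove at most the claimed rows and columns and that occupancy in three rows \emph{and} three columns is the right hypothesis rather than ``three rows or three columns''—is the delicate point, but it is elementary. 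Everything else reduces to the observation that collinearity on $Q^+(3,q)$ is exactly the grid-row-or-column relation, which follows directly from the structure of the two reguli.
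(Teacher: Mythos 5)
Your reduction to the $(q+1)\times(q+1)$ grid is exactly the model the paper uses, and your identification of the unions of two skew lines with two full rows or two full columns of the grid is correct. However, the core combinatorial step has a genuine gap: the claimed dichotomy ``if no three cells are in general position, then all cells lie in at most two rows or all lie in at most two columns'' is false. A \emph{cross} --- one full row together with one full column --- occupies all $q+1$ rows and all $q+1$ columns, yet contains no three cells in pairwise distinct rows and columns: among any three of its cells, at least two lie in the chosen row or at least two lie in the chosen column. The same example defeats your deletion argument: if the first chosen cell is the intersection cell of the cross, deleting its row and its column already removes every remaining cell, and if it is any other cell, the second deletion does; so ``cells in at least three rows and at least three columns'' does not force three cells in general position, contrary to the step you describe as delicate but elementary.

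The lemma itself survives because the omitted configurations are too small to matter, but this must be said. The correct structural statement is that a set $S$ with no three cells in pairwise distinct rows and columns is covered by \emph{two lines of the grid, where the two lines may be two rows, two columns, or one row and one column}. (One quick way to see this: the hypothesis says the bipartite graph on rows versus columns whose edges are the cells of $S$ has no matching of size three, so by K\H{o}nig's theorem it has a vertex cover of size at most two.) A cover consisting of one row and one column contains at most $2q+1$ cells, strictly fewer than $2q+2$, so the maximum $2q+2$ is attained only when $S$ fills two rows or two columns, that is, two skew lines from one regulus. With this third case added your argument closes and matches the paper's (deliberately terse) grid argument; as written, it does not.
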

\begin{proof}
Notice that the hyperbolic quadric $Q^+(3,q)$ is a   $(q+1)\times (q+1)$-grid, so it can be described as follows: Its points are $(i,j)$ for $0\le i,j\le q$ and its lines are $\{(i,j)\mid 0\le i\le q\}$ and $\{(j,i)\mid 0\le j\le q\}$ for $0\le i\le q$. The assertion is easily proved from this.
\end{proof}

The next lemma collects some properties of tree decompositions.

\begin{lemma}\label{treeproperties}
Let $(T,B)$ with $B=(B_t\mid t\in V(T))$ be a tree decomposition of a finite graph $\Gamma$. Let $t$ be a vertex of $T$. Then we have the following.
\begin{enumerate}[(a)]
\item If $t\in V(T)$ and $v\in V(\Gamma)\setminus B_t$, then $T\setminus\{t\}$ has exactly one component such that $v\in B_s$ for some vertex $s$ of this component. We denote the vertex set of this component by $T_t(v)$ and we denote by $\Gamma_t(v)$ the union of the sets $B_s$ with $s\in T_t(v)$.
\item For adjacent vertices $v,w$ of $\Gamma$ with $v\notin B_t$, we have $w\in \Gamma_t(v)$.
\item For adjacent vertices $v,w$ of $\Gamma$ with $v,w\notin B_t$ we have $\Gamma_t(v)=\Gamma_t(w)$.
\item If $v$ is a vertex of a component $C$ of $\Gamma\setminus B_t$, then $\Gamma_t(v)$ contains every vertex of $C$ and every vertex of $\Gamma$ that has a neighbor in $C$.
\end{enumerate}
\end{lemma}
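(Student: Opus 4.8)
\textbf{Proof proposal for Lemma \ref{treeproperties}.}

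The plan is to derive all four parts from the two tree-decomposition axioms (TD1), (TD2) together with the elementary fact that removing a vertex $t$ from a tree $T$ splits it into the components attached at the neighbours of $t$, each such component being the set of vertices of $T$ whose unique $T$-path to $t$ passes through a fixed neighbour.

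For part (a): let $v\in V(\Gamma)\setminus B_t$. By (TD2) the set $S_v:=\{s\in V(T)\mid v\in B_s\}$ is nonempty and induces a connected subgraph of $T$; since $t\notin S_v$, the set $S_v$ is a connected subset of $T\setminus\{t\}$, hence lies entirely inside one component of $T\setminus\{t\}$. That component is unique because distinct components of $T\setminus\{t\}$ are vertex-disjoint, and it is nonempty because $S_v\neq\emptyset$. This defines $T_t(v)$ and $\Gamma_t(v):=\bigcup_{s\in T_t(v)}B_s$ unambiguously.

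For part (b): let $v,w$ be adjacent in $\Gamma$ with $v\notin B_t$. By (TD1) there is a vertex $s\in V(T)$ with $\{v,w\}\subseteq B_s$; since $v\in B_s$ we have $s\in S_v\subseteq T_t(v)$, and since $w\in B_s$ this gives $w\in\Gamma_t(v)$. Part (c) follows from (b): if also $w\notin B_t$, then $\Gamma_t(w)$ is defined, and the vertex $s$ above lies in $S_v\cap S_w\subseteq T_t(v)\cap T_t(w)$, so the two components $T_t(v)$ and $T_t(w)$ of $T\setminus\{t\}$ share a vertex, hence are equal; therefore $\Gamma_t(v)=\Gamma_t(w)$. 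For part (d): let $C$ be a component of $\Gamma\setminus B_t$ and $v\in V(C)$. For any vertex $w$ of $C$, a path in $C$ from $v$ to $w$ consists of edges none of whose endpoints lie in $B_t$, so repeated application of (c) along this path gives $\Gamma_t(w)=\Gamma_t(v)$; in particular $w\in S_w\subseteq T_t(w)$ forces $w\in\Gamma_t(w)=\Gamma_t(v)$, which also handles $w=v$. Finally, if $x\in V(\Gamma)$ has a neighbour $w$ in $C$, then $w\notin B_t$, so part (b) applied to the adjacent pair $w,x$ (with $w$ in the role of the vertex outside $B_t$) gives $x\in\Gamma_t(w)=\Gamma_t(v)$.

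The argument is entirely routine; the only point requiring a little care is the unique-component claim in (a) and the "shared vertex forces equal component" step used in (c) and (d), both of which are immediate from the fact that the components of a forest $T\setminus\{t\}$ partition its vertex set. No serious obstacle is expected.
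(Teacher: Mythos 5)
Your proof is correct and follows essentially the same route as the paper: (a) from axiom (TD2), (b) from (TD1), (c) by combining (a) and (b), and (d) by propagating (c) along paths in the component $C$ and then applying (b) to neighbours of $C$. Your write-up merely spells out the details that the paper's very terse proof leaves implicit.
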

\begin{proof}
Part (a) follows from the second property of the definition of a tree decomposition. Part (b) then follows from the fact that the edge $\{v,w\}$ of $\Gamma$ is contained in $B_s$ for some vertex $s$ of $T$, again by the definition of a tree decomposition. Part (c) follows from (a) and (b). For part (d), we first apply (c) to see that $C\subseteq \Gamma_t(v)$ and then (b) to see that every vertex with a neighbor in $C$ is contained in $\Gamma_t(v)$.
\end{proof}

\begin{theorem}
The treewidth of $K_q(4,2,1)$ is $|V(\Gamma)|-(q^2+q+2)$ for all prime powers $q$.
\end{theorem}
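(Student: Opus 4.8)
My plan is as follows. The upper bound is immediate: with $n=4$, $k=2$, $t=1$ (so $n=2k$), Result~\ref{Frankl&Wilson} gives $\alpha(\Gamma)=\gauss{3}{1}=q^2+q+1$, and $|V(\Gamma)|=\gauss{4}{2}$, so Corollary~\ref{upperbound} yields $\tw(\Gamma)\le|V(\Gamma)|-\alpha(\Gamma)-1=|V(\Gamma)|-(q^2+q+2)$. For the reverse inequality I would assume, for a contradiction, that $\tw(\Gamma)\le|V(\Gamma)|-\alpha(\Gamma)-2$. By Lemma~\ref{Robertson&Seymour} there is a tree decomposition $(T,B)$ of width $\tw(\Gamma)$ satisfying~(a), a vertex $t_0$ of $T$, and a set $P\subseteq B_{t_0}$ with $|P|\le\tw(\Gamma)+1$ such that every component of $\Gamma\setminus P$ has at most $\frac12|V(\Gamma)\setminus P|$ vertices. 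Put $Y:=V(\Gamma)\setminus P$; then $|Y|\ge\alpha(\Gamma)+1=q^2+q+2$, so $Y$ is not independent, and each component of $\Gamma[Y]=\Gamma\setminus P$ has at most $\frac12|Y|$ vertices. I work in the model of $\Gamma$ on $Q^+(5,q)$: two points are adjacent iff they are not perpendicular, iff the line joining them is not contained in $Q^+(5,q)$; a totally isotropic subspace carries no edge; and distinct components of $\Gamma[Y]$ are mutually perpendicular.

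The first key step is to show that \emph{no component of $\Gamma[Y]$ contains a triangle}. If $a,b,c$ were pairwise adjacent in a component $D$, they would not be collinear, so $\pi:=\erz{a,b,c}$ would be a plane; it is not a generator, and it does not meet $Q^+(5,q)$ in a point, a line or a pair of lines (each would put two of $a,b,c$ on a common line of the quadric), so $\pi\cap Q^+(5,q)$ is a non-degenerate conic, hence $\pi^\perp$ meets $Q^+(5,q)$ in a conic of $q+1$ points as well. Since $Y\setminus D$ is perpendicular to $a,b,c$, it lies in $\pi^\perp\cap Q^+(5,q)$, so $|Y\setminus D|\le q+1$; but $|Y\setminus D|\ge\frac12|Y|\ge\frac12(q^2+q+2)>q+1$ for $q\ge2$ — a contradiction. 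Thus $\Gamma[Y]$ is triangle-free. Now fix a component $D_1$ of $\Gamma[Y]$ with an edge $\{S_1,S_2\}$; then $\ell:=\erz{S_1,S_2}$ is a secant line and $\ell^\perp\cap Q^+(5,q)=Q^+(3,q)$. As $Y\setminus D_1$ is perpendicular to $S_1$ and $S_2$ it lies in this $Q^+(3,q)$, and three of its points that were pairwise non-collinear would be pairwise adjacent, forcing a triangle inside one component; so $Y\setminus D_1$ contains no three pairwise non-collinear points, and Lemma~\ref{hyperbolicquadric} gives $|Y\setminus D_1|\le2q+2$, whence $|D_1|\ge|Y|-2q-2\ge q^2-q$ and $|Y|=|D_1|+|Y\setminus D_1|\le\frac12|Y|+2q+2$, i.e. $|Y|\le4q+4$. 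Moreover $Y\setminus D_1\subseteq\erz{D_1}^\perp\cap Q^+(5,q)$. Put $d:=\dim\erz{D_1}$. Since $D_1$ has an edge, $\erz{D_1}$ is not totally isotropic, so for $d\le2$ it is not a generator and $|D_1|\le|\erz{D_1}\cap Q^+(5,q)|$ is at most $q+1$ if $d\le1$ and at most $2q+1$ if $d=2$. If $d\le1$ this contradicts $|D_1|\ge q^2-q$ when $q\ge3$. If $d\ge3$ then $\erz{D_1}^\perp$ has projective dimension $\le1$, so $|Y\setminus D_1|\le q+1$ and, with $|D_1|\le\frac12|Y|$, $|Y|\le2q+2<q^2+q+2$ — impossible. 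If $d=2$ then $q^2-q\le2q+1$, forcing $q\le3$; for $q=3$ we get $|D_1|\in\{6,7\}$, while $\erz{D_1}^\perp$ is again a non-generator plane (otherwise $\erz{D_1}^\perp=\erz{D_1}$ would be a generator), so $|Y\setminus D_1|\le2q+1=7$ and $|Y|\le14$; hence $|Y|=14$, $|D_1|=7=|\erz{D_1}\cap Q^+(5,q)|$, so $D_1$ equals the pair of lines forming that plane section — but $\Gamma$ on a pair of lines has an isolated vertex, contradicting connectedness of $D_1$. This settles all $q\ge3$.

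For $q=2$ the bound $|Y\setminus D_1|\le2q+2$ still gives only $8\le|Y|\le12$, and one has to use the tree decomposition more closely. Since $|V(\Gamma)\setminus B_{t_0}|\ge|V(\Gamma)|-(\tw(\Gamma)+1)\ge\alpha(\Gamma)+1$, the (triangle-free) graph $\Gamma\setminus B_{t_0}$ is not independent; let $C$ be a non-trivial component of it, lying in the branch of $T\setminus t_0$ at a neighbour $t_1$ of $t_0$. By Lemma~\ref{treeproperties} the union of the bags along that branch contains $C$ together with every vertex adjacent to $C$; the latter, being outside the component $C$, lie in $B_{t_0}$, hence in $B_{t_0}\cap B_{t_1}$. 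By property~(a), $B_{t_1}$ also contains a vertex outside $B_{t_0}$, so $|N(C)|<|B_{t_1}|\le\tw(\Gamma)+1$, where $N(C)$ denotes the set of neighbours of $C$ outside $C$; thus $|N(C)|\le\tw(\Gamma)\le|V(\Gamma)|-\alpha(\Gamma)-2$. As $C$ is connected with $\ge2$ vertices, no point of $C$ is perpendicular to all of $C$, so the points outside $C$ non-adjacent to $C$ are exactly those of $\erz{C}^\perp\cap Q^+(5,q)$, giving $|N(C)|=|V(\Gamma)|-|C|-|\erz{C}^\perp\cap Q^+(5,q)|$; hence $|C|+|\erz{C}^\perp\cap Q^+(5,q)|\ge\alpha(\Gamma)+2=q^2+q+3$. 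Since $C$ lies inside a component of $\Gamma\setminus P$ it has at most $\frac12|Y|\le6$ vertices, and is connected and triangle-free; going through the possibilities for $\dim\erz{C}$ — using that $\Gamma$ on a pair of lines and on a union of two generators has isolated vertices, that a non-generator plane meets $Q^+(5,2)$ in at most five points, and that a subspace of projective dimension $\ge3$ has perpendicular meeting $Q^+(5,2)$ in at most $q+1=3$ points — one finds that $C$ can only be an edge on a secant line, a $4$-cycle cut from a pair of lines, or a six-point configuration inside a union of two generators, the last two cases only when the displayed inequality is an equality. With this classification of all non-trivial components of $\Gamma\setminus B_t$ (valid at every bag $B_t$), a careful analysis of how these few configurations and the remaining vertices — in particular the common point of each pair of lines — are distributed among the branches of $T$ should produce the final contradiction.

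Putting the three steps together, $\tw(\Gamma)\le|V(\Gamma)|-\alpha(\Gamma)-2$ is untenable, so $\tw(\Gamma)=|V(\Gamma)|-(q^2+q+2)$. The hard part will be the case $q=2$: the global counting of the first two steps is no longer decisive, a handful of small configurations for the components of $\Gamma\setminus B_t$ survive, and eliminating them forces one to exploit the detailed structural properties of tree decompositions isolated in Lemma~\ref{treeproperties}; everything else is counting in $Q^+(5,q)$ together with Lemmata~\ref{hyperbolicquadric} and~\ref{Robertson&Seymour}.
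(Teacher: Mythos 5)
Your upper bound and your treatment of $q\ge 3$ are sound, and for $q\ge3$ your route is a legitimate mild variant of the paper's: where the paper distinguishes whether the span of $Y\setminus X$ is the solid $\ell^\perp$ or a plane inside it, you classify by $\dim\erz{D_1}$ for the large component $D_1$ and count points of quadric sections; both arguments reduce everything to $q=2$ by elementary counting in $Q^+(5,q)$ together with Lemma \ref{hyperbolicquadric}.

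The genuine gap is the case $q=2$, which is precisely the content of the theorem (this is the one case left open in \cite{CaoLiuLuLv}, since for larger $q$ the counting alone suffices). Two steps of your $q=2$ plan are not actually proofs. First, the classification of the nontrivial components $C$ of $\Gamma\setminus B_{t_0}$ (``an edge on a secant line, a $4$-cycle cut from a pair of lines, or a six-point configuration inside a union of two generators'') is only asserted via ``going through the possibilities for $\dim\erz{C}$''; the inequality $|C|+|\erz{C}^\perp\cap Q^+(5,2)|\ge q^2+q+3$ that you derive is a reasonable tool, but the case-by-case verification (including which solid and plane sections of $Q^+(5,2)$ can occur, and which connected triangle-free subgraphs they carry) is not carried out. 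Second, and more seriously, the concluding step --- ``a careful analysis of how these few configurations and the remaining vertices \dots are distributed among the branches of $T$ should produce the final contradiction'' --- is exactly the hard part of the theorem and is entirely missing. In the paper this is where the real work happens: after forcing $q=2$ and pinning down $Y$ inside the nine points of $\pi\cup\pi^\perp$, one must argue with the refined properties of the tree decomposition (property (a) of Lemma \ref{Robertson&Seymour}, condition (TD2), and Lemma \ref{treeproperties}) through two delicate subcases (according to whether the common point $z$ of the two line pairs lies in $B_{t_0}$), eventually exhibiting either a bag contained in a neighbouring bag or a vertex whose set of bags is disconnected in $T$. Nothing in your proposal substitutes for that analysis; without it the lower bound for $q=2$ is unproved, so the statement as a whole is not established. (A small additional caution: in your sketch you invoke facts such as ``$\Gamma$ on a union of two generators has isolated vertices,'' which is true when the generators meet in a line but needs justification in the form you use it; these details would have to be checked once the classification step is written out.)
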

\begin{proof}
We represent $\Gamma:=K_q(n,2,1)$ by $Q^+(5,q)$ as explained above. We also consider the ambient projective space $\PG(5,q)$ of $Q^+(5,q)$ and the related polarity $\perp$. For each set $X$ of points of $Q^+(5,q)$, we denote by $\erz{X}$ the subspace of $\PG(5,q)$ that is spanned by the points of $X$. The set $\erz{X}^\perp\cap Q^+(5,q)$ consists of the points of $Q^+(5,q)$ that are perpendicular to all points of $X$. In $\Gamma$ the set $\erz{X}^\perp\cap Q^+(5,q)$ consists of the vertices that are not adjacent to any vertex in $X$ (notice that vertices of $X$ may lie in this set).

From Result \ref{Frankl&Wilson} we have $\alpha(\Gamma)=q^2+q+1$ and Corollary \ref{upperbound} shows that $\tw(\Gamma)\le |V(\Gamma)|-\alpha(\Gamma)-1$. Assume that
\begin{align*}
\tw(\Gamma)\le |V(\Gamma)-\alpha(\Gamma)-2\le |V(\Gamma)|-q^2-q-3.
\end{align*}
We shall derive a contradiction.

Lemma \ref{Robertson&Seymour} shows that there exists a tree decomposition $(T,B)$, $B=(B_t\mid t\in V(T))$, of $\Gamma$, a vertex $t_0\in V(T)$ and a subset $P$ of $B_{t_0}$ with the following properties
\begin{enumerate}[(T1)]
\item The tree decomposition $(T,B)$ has width $\tw(\Gamma)$.
\item For adjacent vertices $s$ and $t$ of $T$ we have $B_s\not\subseteq B_t$.
\item With $Y:=V(\Gamma)\setminus P$, every component of the graph $\Gamma_Y$ induced by $\Gamma$ on $Y$ has at most $\frac12|Y|$ vertices.
\end{enumerate}
As $|P|\le |B_{t_0}|\le\tw(\Gamma)+1$, then
\begin{align}\label{boundY}
|Y|=|V(\Gamma)|-|P|\ge |V(\Gamma)|-|B_{t_0}|\ge |V(\Gamma)|-\tw(\Gamma)-1\ge q^2+q+2.
 \end{align}
Hence $|Y|>\alpha(\Gamma)$ and thus $Y$ contains adjacent vertices $v_1$ and $v_2$. Let $X$ be the vertex set of the component of $\Gamma_Y$ that contains $v_1$ and $v_2$. Then no vertex of $Y\setminus X$ is adjacent to any vertex of $X$.
Since $X$ is a component of $\Gamma_Y$, we have $|X|\le\frac12|Y|$ and hence $|Y\setminus X|\ge \frac12|Y|\ge \frac12(q^2+q+2)>q+1$.

On $Q^+(5,q)$ the set $Y$ is a set of points, $X$ is a subset of $Y$, $v_1$ and $v_2$ are non-perpendicular points of $X$, and every point of $Y\setminus X$ is distinct and perpendicular to every point of $X$. Therefore $Y\setminus X$ is a subset of $\erz{X}^\perp\cap Q^+(5,q)$. Hence the subspaces $\erz{X}$ and $\erz{Y\setminus X}$ are perpendicular.

Case 1. There exists three pairwise non-perpendicular points in $Y$.

We may assume that these are $v_1$ and $v_2$ and a third point $v_3$. Then $v_3\in X$ and $v_1,v_2,v_3$ are pairwise non-collinear points of $Q^+(5,q)$. Therefore $\pi:=\erz{v_1,v_2,v_3}$ is a conic plane, that is a plane of $\PG(5,q)$ that meets $Q^+(5,q)$ in the $q+1$ points of a conic. We have  $Y\setminus X\subseteq \erz{X}^\perp\subseteq \pi^\perp$. As $\pi$ is a conic plane, then $\pi^\perp$ is also a conic plane and hence has $q+1$ points on $Q^+(5,q)$. Therefore $|Y\setminus X|\le |\pi^\perp\cap Q^+(5,q)|=q+1$. But we have seen above that $|Y\setminus X|>q+1$. This is a contradiction.

Case 2. $Y$ does not contain three pairwise non-perpendicular points.

The line $\ell$ of $\PG(5,q)$ on $v_1$ and $v_2$ is a secant line of $Q^+(5,q)$ and hence $\ell^\perp$ is a 3-space of $\PG(5,q)$ that meets $Q^+(5,q)$ in a hyperbolic quadric $Q^+(3,q)$. We have $Y\setminus X\subseteq \erz{X}^\perp\subseteq\ell^\perp$, so $Y\setminus X$ is contained in this hyperbolic quadric and does not contain three pairwise non-collinear points. As $|Y\setminus X|>q+1$, then $\erz{Y\setminus X}$ is either the solid $\ell^\perp$ or a plane of this solid.

Case 2.1. $\erz{Y\setminus X}$ is the solid $\ell^\perp$.

Then $Y\setminus X$ is a subset of $\ell^\perp\cap Q^+(5,q)$, which is the hyperbolic quadric $Q^+(3,q)$ already mentioned above. Lemma \ref{hyperbolicquadric} shows therefore that $Y\setminus X$ contains at most $2(q+1)$ points. As $\erz{Y\setminus X}=\ell^\perp$, then $\ell=\erz{Y\setminus X}^\perp$ and since $X$ and $Y\setminus X$ are perpendicular, if follows that $X\subseteq \ell$, that is $X=\{v_1,v_2\}$. It follows that $|Y|=|X|+|Y\setminus X|\le 2+2(q+1)$. Therefore \eqref{boundY} implies that $q=2$ and  $|Y|=8$ and $|X|=2$. Hence $Y\setminus X$ consists of six points of $Q^+(3,q)$ and Lemma \ref{hyperbolicquadric} shows that these points are the points of two skew lines of $Q^+(3,q)$. The graph induced by $\Gamma$ on $Y\setminus X$ is therefore a 6-cycle. Since $Y$ has eight points, this contradicts property (T3) of the tree decomposition $(T,B)$.

Case 2.2 $\pi:=\erz{Y\setminus X}$ is a plane of $\ell^\perp$.

As $|Y|>q+1$, then $\pi$ meets $Q^+(5,q)$ in the union of two lines. Let $z$ be the point of intersection of these two lines. The subspace $\pi^\perp$ is also a plane that meets $Q^+(5,q)$ in the union of two lines and $z$ is also their intersection point. Since two intersecting lines contain $2q+1$ points, then $X$ and $Y\setminus X$ have at most $2q+1$ points, but since $z$ belongs to at most one of these sets, we have $|Y|=|X|+|Y\setminus X|\le 4q+1$. Since $|Y|\ge q^2+q+2$, it follows that $q=2$ and $|Y|\in\{8,9\}$ and \eqref{boundY} can be written as follows
\begin{align}\label{boundY2}
|V(\Gamma)|-9\le |V(\Gamma)|-|Y|=|P|\le |B_{t_0}|\le \tw(\Gamma)+1\le |V(\Gamma)|-8.
\end{align}
Clearly in \eqref{boundY2} all inequalities are sharp except one that is missing sharpness by one.
Recall that $V(\Gamma)\setminus Y=P\subseteq B_{t_0}$, so either $P=B_{t_0}$ and $B_{t_0}\cap Y=\emptyset$ or otherwise $|B_{t_0}|=|P|+1$ and $|Y\cap B_{t_0}|=1$.

Denote by $\bar Y$ the set consisting of the $4q+1=9$ points of $Q^+(5,q)$ in $\pi\cup \pi^\perp$, which are $z$ and eight points of the two $4$-cycles. Then $Y\subseteq \bar Y$. From \eqref{boundY2} we have $|V(\Gamma)\setminus B_0|\in\{8,9\}$. Since $V(\Gamma)\setminus B_{t_0}\subseteq V(\Gamma)\setminus P=Y\subseteq \bar Y$, it follows that $|B_{t_0}\cap \bar Y|\le 1$. The graph induced by $\Gamma$ on $\bar Y$ is the disjoint union of the singleton $z$ and two 4-cycles, each 4-cycle consisting of the four points of $Q^+(5,q)$ other than $z$ of one of the planes $\pi$ and $\pi^\perp$. We refer to these two $4$-cycles as \emph{the $4$-cycles} for the rest of the proof. We denote a $4$-cycle by $uvwx$ and mean hereby that $u\sim v\sim w\sim x\sim u$ but there are no other adjacencies between the vertices $u,v,w$ and $x$; in $Q^+(5,q)$, the points $z,v,x$ as well as the points $z,w,u$ are the points of the two lines of $\pi$ or $\pi^\perp$ on $z$. Notice that any three vertices of the 4-cycle in $\pi$ span $\pi$, so that every point that is perpendicular to these three vertices lies in $\pi^\perp$. The same holds for the $4$-cycle n $\pi^\perp$. Hence we have:

(E) Given three vertices of one of the $4$-cycles, the only vertices that have no neighbor among these three vertices are $z$ and the vertices of the second $4$-cycle.

Case 2.2.1: $z\notin B_{t_0}$.

We have $|B_{t_0}\cap \bar Y|\le 1$. Hence, if $B_{t_0}$ contains a vertex of $\bar Y$, then $z\notin B_{t_0}$ implies that this vertex belongs to one of the two 4-cycles. If $B_{t_0}$ contains no vertex of $\bar Y$, then we let $uvwx$ be any of the two 4-cycles, and otherwise we let $uvwx$ be the $4$-cycle with a vertex in $B_{t_0}$ and number the vertices of the $4$-cycle in such a way that $x\in B_{t_0}$. Then $u,v,w\notin B_{t_0}$.

Consider the components $T_{t_0}(u)$, $T_{t_0}(v)$ and $T_{t_0}(w)$
defined in Lemma \ref{treeproperties}. Since $u\sim v\sim w$, part (c) of Lemma \ref{treeproperties} shows that these three components are the same, and hence the corresponding sets $\Gamma_{t_0}(u)$, $\Gamma_{t_0}(v)$ and $\Gamma_{t_0}(w)$ defined in Lemma \ref{treeproperties} are the same. Since the only vertices that have no neighbor in $\{u,v,w\}$ are $z$ and the four vertices of the second $4$-cycle and since these five vertices do not belong to $B_{t_0}$, part (b) of Lemma \ref{treeproperties} shows that $B_{t_0}\subseteq \Gamma_{t_0}(u)$. Let $t$ be the unique vertex of the component $B_{t_0}(u)$ that is adjacent to $t_0$ in $T$. Since $B_0\subseteq \Gamma_{t_0}(u)$ and since $(T,B)$ is a tree decomposition of $\Gamma$, property (TD2) of the definition of a tree decomposition implies that $B_{t_0}\subseteq B_t$. This contradicts property (T2) of the tree decomposition $(T,B)$.

Case 2.2.2. $z\in B_{t_0}$.

We have seen that $|Y|\in\{8,9\}$. If $|Y|=8$, then (10) shows that $P=B_{t_0}$ and $|B_{t_0}|=\tw(\Gamma)+1=|V(\Gamma)|-8$. Since $P\cap Y=\emptyset$, we have $z\notin Y$ in this case. If $|Y|=9$, then $z\in \bar Y=Y$ and hence $z\notin P$, so (10) shows that $B_{t_0}=P\cup\{z\}$ and $|B_{t_0}|=\tw(\Gamma)+1=|V(\Gamma)|-8$.

In any case we have $|B_{t_0}|=\tw(\Gamma)+1=|V(\Gamma)|-8$ and the eight vertices that are not in $B_{t_0}$ are the eight vertices of the two $4$-cycles and these eight vertices lie in $Y$. Put $E:=B_{t_0}\setminus \{z\}$. Then $|E|=|V(\Gamma)|-9=\tw(\Gamma)$ and $B_{t_0}=E\cup\{z\}$.

Let $uvwx$ be one of the two $4$-cycles. As before, Lemma \ref{treeproperties} shows that the sets $T_{t_0}(a)$ with $a$ in this $4$-cycle are all the same, and hence that the corresponding sets $\Gamma_{t_0}(a)$ are all the same. Lemma \ref{treeproperties} shows that every vertex which has a neighbor in the $4$-cycle $uvwx$ lies in $\Gamma_{t_0}(a)$. Since the only vertices with no neighbor in this $4$-cycle are $z$ and the four vertices of the second $4$-cycle, it follows that $E\subseteq \Gamma_{t_0}(a)$. Let $t$ be the vertex of the component of $T_{t_0}(u)$ that is adjacent to $t_0$.

We next show that $E\subseteq B_t$. To see this, consider $e\in E$. Then $e\in\Gamma_{t_0}(w)$ and hence $e\in B_i$ for some $i\in T_i(w)$. Since also $e\in B_{t_0}$ and since $t$ lies on the unique path of the tree $T$ from $t$ to $i$, it follows that $e\in B_t$. As this holds for any $e\in E$, we find $E\subseteq B_t$.

As $B_{t_0}=E\cup\{z\}$, then (T2) implies that $B_t\not=E$ and $z\notin B_t$.  Since the tree representation under consideration has treewidth $\tw(\Gamma)+1$, then $|B_t|\le \tw(\Gamma)+1=|E|+1$. It follows that there exists a unique vertex $r$ with $B_t=P\cup\{r\}$. Also $r\not=z$ and $r\notin E$.

Since $r\notin B_{t_0}$, then $r$ is a vertex of one of the two 4-cycles. If it is not a vertex of the $4$-cycle $uvwx$ but of the other 4-cycle, then the other 4-cycle is also contained in $\Gamma_{t_0}(u)$. We may thus assume that $r=u$.

Then $v,w,x\notin B_t$. Consider $\Gamma_t(w)$ and let $s$ be the unique vertex of the component $T_t(w)$ that is adjacent to $t$. Since $vwx$ is a path of length two, then Lemma \ref{treeproperties} shows that $v,x\in\Gamma_t(w)$ and $\Gamma_t(w)=\Gamma_t(v)=\Gamma_t(x)$. By property (E), every vertex of $E$ is adjacent to at least one vertex of the path $vwx$. Therefore Lemma \ref{treeproperties} shows that $E\subset \Gamma_t(w)$. As $E\subseteq B_t$, the argument used above to show that $E\subseteq B_t$ shows now that $E\subseteq B_s$. As $B_t=E\cup\{u\}$, then (T2) shows that $u\notin B_s$.

As $|E|=|V(\Gamma)|-9$ and $|B_s|\le\tw(\Gamma)+1=|V(\Gamma)|-8$, it follows that $B_s$ contains at most one of the vertices $v$ and $x$, we may assume w.l.o.g. that $v\notin B_s$. Recall that $v$ lies in $\Gamma_t(w)$, which is the union of the sets $B_i$ with vertices $i$ of $T_t(w)$. As $v\notin B_s$, then the fact that $(B,T)$ is a tree decomposition implies that every vertex $i$ of $T$ with $v\in B_i$ must be a vertex of $T_t(w)$. Since $\{u,v\}$ is an edge, there exists a vertex $i$ of $T$ with $u,v\in B_i$. Then $i$ is a vertex of $T_t(w)$. We have $u\in T_t$ and $u\in T_i$. But the path of $T$ from $t$ to $i$ contains the vertex $s$ and $u\notin T_s$. Hence the graph induced by $T$ on the vertices $j$ with $u\in B_j$ is disconnected. This contradicts the fact that $(B,T)$ is a tree decomposition of $\Gamma$.
\end{proof}

\bibliographystyle{plain}

\end{document}